\def\N{{\mathbb{N}}}
\def\R{{\mathbb{R}}}
\def\dt{{\textrm{d}t}}
\def\d{{\textrm{d}}}
\def\IMF{{\textrm{IMF}}}
\newtheorem{theorem}{Theorem}
\newtheorem{definition}{Definition}
\begin{document}

\title{Multivariate Fast Iterative Filtering for the decomposition of nonstationary signals}

\author{Antonio~Cicone\thanks{DISIM, University of L'Aquila, L'Aquila, Italy ({\tt antonio.cicone@univaq.it})\\ Istituto di Astrofisica e Planetologia Spaziali, INAF, Roma, Italy\\
Istituto Nazionale di Geofisica e Vulcanologia, Roma, Italy} 

Enza~Pellegrino\thanks{DIIIE, University of L'Aquila, L'Aquila, Italy ({\tt enza.pellegrino@univaq.it})}
}

\maketitle

\begin{abstract}
In this work, we present a new technique for the decomposition of multivariate data, which we call Multivariate Fast Iterative Filtering (MvFIF) algorithm.
We study its properties, proving rigorously that it converges in finite time when applied to the decomposition of any kind of multivariate signal.

We test MvFIF performance using a wide variety of artificial and real multivariate signals, showing its ability to: separate multivariate modulated oscillations; align frequencies along different channels; produce a quasi--dyadic filterbank when decomposing white Gaussian noise; decompose the signal in a quasi--orthogonal set of components; being robust to noise perturbation, even when the number of channels is increased considerably.

Finally, we compare its performance with the one of the main methods developed so far in the literature, proving that MvFIF produces, without any a priori assumption on the signal under investigation and in a fast and reliable manner, a uniquely defined decomposition of any multivariate signal.
\end{abstract}

%\begin{IEEEkeywords}
%Multivariate signals; Multichannel data set; nonstationary signal decomposition; time--frequency analysis; Empirical Mode Decomposition; Multivariate Empirical Mode Decomposition; Fast Iterative Filtering
%\end{IEEEkeywords}
%
%\IEEEpeerreviewmaketitle

\section{Introduction}\label{sec:Intro}

The decomposition and time--frequency (TF) analysis of nonstationary signals is a long lasting line of research which have led over the decades to the development of many and important new algorithms and approaches, which are nowadays commonly used, practically, in any field of research.

Among these methods there is the so called Empirical Mode Decomposition (EMD) method proposed in 1998 by Huang and his collaborators \cite{huang1998empirical}. EMD is a local and adaptive completely data--driven method which has an iterative ``divide et impera'' approach. The idea is simple, but powerful. We first iteratively divide a given signal $s(t)$ into several simple oscillatory components $c_k(t)$ plus a trend $r(t)$
\begin{equation}\label{eq:IMFs}
    s(t)=\sum_{k=1}^{K}c_k(t)+r(t)
\end{equation}
Then each oscillatory component is analyzed separately in the TF domain via the computation of its instantaneous frequency \cite{huang2009instantaneous}. This approach allows to bypass the Heisenberg--Gabor uncertainty principle, overcoming artificial spectrum spread caused by sudden changes. EMD allows to produce each simple component via the subtraction of the signal moving average which is computed as the average between two envelopes connecting its minima and maxima. The $c_k(t)$ simple components, named Intrinsic Mode Functions (IMFs), are functions that fulfill two properties: the number of zero crossing equals the number of extrema or differ at most by one; considering upper and lower envelopes connecting respectively all the local maxima and minima of the function, their mean has to be zero at any point. For more details on the EMD and its properties we refer the interested reader to \cite{huang1998empirical}.

The EMD received a lot of attention over the years, as testified by the high number of citations of Huang's papers\footnote{The original work by Huang et al. \cite{huang1998empirical} as received so far, by itself, more than 14100 unique citations, according to Scopus}, and it has been applied to a wide variety of problems, see for instance \cite{stallone2020new} and references therein. Nevertheless, the EMD algorithm contains a number of heuristic and ad hoc elements that make hard to analyze mathematically its guarantees of accuracy or the limitations of its applicability. This is because the core of the algorithm relies heavily on interpolates of signal maxima and minima. This very approach does have also some stability problems in the presence of noise, as illustrated in \cite{wu2009ensemble}. Several variants of the EMD have been recently proposed to address this last problem, e.g. the Ensemble Empirical Mode Decomposition (EEMD) \cite{wu2009ensemble}, the complementary EEMD \cite{yeh2010complementary}, the complete EEMD \cite{torres2011complete}, the partly EEMD \cite{zheng2014partly}, the noise assisted multivariate EMD (NA-MEMD) \cite{rehman2013EMDviaMEMD}. They all allow to address this issue as well as reduce the so called mode mixing problem. But they pose new challenges both to our mathematical understanding of this kind of techniques and to the ability of these methods to handle nonstationary signals, since they worsen the mode--splitting problem present in the EMD algorithm \cite{yeh2010complementary}.

For all these reasons many research groups started working on this topic and proposed their alternative approaches to signals decomposition. We recall, for instance, the sparse time--frequency representation \cite{hou2011adaptive,hou2009variant}, the Geometric mode decomposition \cite{yu2018geometric}, the Empirical wavelet transform \cite{gilles2013empirical}, the Variational mode decomposition \cite{dragomiretskiy2013variational}, and similar techniques, like for instance \cite{meignen2007new}. All of these methods are based on optimization with respect to an a priori chosen basis.

The only alternative method proposed so far in the literature which is based on iterations, and hence does not require any a priori assumption on the signal under analysis, is the Iterative Filtering (IF) algorithm \cite{lin2009iterative}. This alternative iterative method, although published only recently, has already been used effectively in a wide variety of applied fields like, for instance, in \cite{spogli2019role,sharma2017automatic,li2018entropy,hossein2020disentangling}. The structure of the IF algorithm resemble the EMD one. Its key difference is in the way the signal moving average is computed, i.e., via correlation of the signal itself with an a priori chosen filter function, instead of using the average between two envelopes.

In particular in the IF technique the moving average $\mathcal{M}(s)(x)$ is computed as a convolution of the given signal $s$ with a compactly supported filter or window $w$ which provides the weights to be used to compute the local average
\begin{equation}\label{eq:Mov_Average}
\mathcal{M}(s)(x)=\int_{-L}^{L} s(x+t)w(t)\dt
\end{equation}
The method becomes nonlinear because $L$, which is called the \emph{filter length} and represents half support length of the filter function $w$, is computed based on the information contained in the signal itself. For instance, as suggested in \cite{lin2009iterative,cicone2016adaptive}, we can make use of the relative distance between subsequent extrema of the signal under study to determine the value $L$.

This apparently simple difference opened the doors to the mathematical analysis of IF \cite{wang2013convergence,cicone2016adaptive,huang2009convergence,cicone2020Direct}. Among the results published, we mention the ones regarding the IF a priori guarantee of convergence \cite{cicone2016adaptive}, and IF acceleration in what is called Fast Iterative Filtering (FIF), which is based on special properties of the matrices involved in the moving average whose calculation can be speed up via Fast Fourier Transform (FFT) \cite{cicone2020Direct}. This last result allowed also to transform IF into a direct technique \cite{cicone2020Direct}. All these accelerations are extremely valuable especially in view of the extension of this method to handle multidimensional and multivariate signals which may contain even billions of sample points. If, on the one hand, IF method has been already extended to deal with multidimensional signals and its convergence and stability have been already established theoretically \cite{cicone2017multidimensional}, the extension of IF algorithm to handle multivariate signals has never been addressed in the literature. However, multivariate signals are ubiquitous in real life. We can think, for instance, to physiological signals in Medicine, like the ECG, EEG, and similar data sets, or the study of complex systems done in Engineering, Finance, Economy, Physics and many other fields of research, where several quantities have to be analyzed at once. It is clear that there is the need to develop a multivariate extension of the IF method. We plan to do so in this work.

It is important to underline that, over the decades, many methods have been proposed which allow to iteratively decompose a signal into simple oscillatory components. We recall here, for instance, the MUltiple SIgnal Classification (MUSIC) algorithm \cite{schmidt1986multiple}, relaxation (RELAX) algorithm \cite{li1996efficient}, sparsity-driven optimization technique \cite{onhon2011sparsity}, and similar techniques. All these methods require an a priori basis selection on which the signal is projected and the knowledge of the number of components contained in the signal. The IF-based methods, instead, like the EMD-based techniques, do not require any a priori selection of a basis, and the a priori knowledge of the number of simple oscillatory components contained in a given signal. Moreover, IF-based methods have been widely studied from a mathematical point of view and their properties are all well known now \cite{huang2009convergence,cicone2020Direct}, whereas EMD-based techniques are still missing \cite{huang2009convergence}. 

The rest of the paper is organized as follows: in the next section we review the main methods developed so far for the decomposition of multivariate signals. In Section \ref{sec:MvFIF} we recall the FIF method and its main properties. Then we detail the newly proposed algorithm called Multivariate Fast Iterative Filtering (MvFIF) technique and we prove its convergence properties. Section \ref{sec:Performance} is devoted to the analysis of the performance of this technique. In particular, we show its ability to: separate multivariate modulated oscillations; align frequencies along different channels; produce a quasi--dyadic filterbank when decomposing white Gaussian noise; decompose the signal in a quasi--orthogonal set of components; being robust to noise perturbation, even when the number of channels is increased considerably. In Section \ref{sec:Examples} we present some applications to real life signals and we compare the MvFIF performance with the one of the main techniques available in the literature. The paper ends with Section \ref{sec:End}, where we derive our final conclusions and outline future directions of research.

\section{Multivariate extensions of nonstationary signal decomposition methods}\label{sec:review}

Several methods have been proposed in the literature which extend EMD method to handle multivariate nonstationary signals, i.e. multichannels nonstationary signals. This kind of signals require a special handling: alignment of frequencies along all the channels has to be guaranteed. The first methods proposed were able to handle complex and bidimensional nonstationary signals evolving over time, like, for instance, in  \cite{Tanaka2006ComplexEMD,rilling2007Bivariate}. Subsequently, first a trivariate, and then a multivariate generalization of the standard EMD method, called Multivariate Empirical Mode Decomposition (MEMD), were introduced in \cite{rehman2010trivariate} and \cite{rehman2009Multivariate}, respectively.

The MEMD boils down to the idea of computing the decompositions of several projections along different directions of an n--dimensional signal evolving over time and then deriving a unique average decomposition. The signal projections are chosen using a suitable set of directions obtained from a wise sampling on an n--sphere, see \cite{rehman2009Multivariate} for more details.

The MEMD algorithm, which was further developed and analyzed, for instance, in \cite{thirumalaisamy2018fast}, proved to be useful in many applications \cite{fleureau2011multivariate,park2012classification,hemakom2016adaptive}, and even to stabilize the standard EMD method \cite{rehman2013EMDviaMEMD}.

This approach, however, proves to be computationally demanding, since many different projections of the same signal are required in the computations. The problem becomes even more evident as the number of channels grows. For this reason the so called Fast and Adaptive Multivariate and Multidimensional EMD (FA-MVEMD) has been recently proposed \cite{thirumalaisamy2018fast}.
However, both MEMD and FA-MVEMD cannot produce a truly unique decomposition, because of the different projections required to produce the envelopes.

Nevertheless, given the success that MEMD and related method obtained, other research groups proposed in the last few years alternative methods for the decomposition of multivariate signals. We mention here, the Multivariate Variational Mode Decomposition (MVMD) method \cite{ur2019multivariate}, which is based on optimization, the Multivariate Singular Spectrum Analysis (MSSA) algorithm \cite{golyandina2015MSSA}, which is based on singular value decomposition, the Synchrosqueezing--based time-frequency analysis method \cite{ahrabian2015synchrosqueezing}, the empirical wavelet transform based approach for multivariate data processing and the multivariate projection-based EWT \cite{singh2018empirical,tripathy2020}, which leverage on the wavelet transform.

All these methods present a certain degree of uncertainty. As already mentioned, the MEMD, and also the multivariate projection-based EWT \cite{singh2018empirical,tripathy2020}, require to compute projections of the signal. This has two side effects: it slows down the computations, especially as the number of channels grows, and it prevents these methods from providing a uniquely identified decomposition. Moreover, EMD-based techniques are still missing a mathematical foundation \cite{huang2009convergence}. Wavelet based techniques, like the Synchrosqueezing--based time-frequency analysis method, the empirical wavelet transform based approach for multivariate data processing, and the multivariate projection-based EWT require to select a priori the basis to be used. MVMD and MSSA techniques both necessitate to set a priori the number of components to be extracted. Furthermore, MSSA requires also the a priori selection of a window size. 

In this work we propose a new technique which has an iterative structure, as MEMD, but it is now based on the iterative method called Fast Iterative Filtering technique, which has been proven recently to be convergent and stable \cite{cicone2020Direct}. MvFIF proves to be able to quickly decompose, in a uniquely determined way, a multivariate signal in simple oscillatory components in a fast and certain way, without any a priori assumption on the signal under investigation.

\section{Proposed Approach}\label{sec:MvFIF}

\subsection{Fast Iterative Filtering method}

We start this section by reviewing the Fast Iterative Filtering method and its properties.

The key idea behind this decomposition technique is the separation of simple oscillatory components contained in a signal $s$, the so called IMFs, by approximating the moving average of $s$ and iteratively subtracting it from $s$ itself.
We remind that the idea of iteratively subtracting moving averages comes from the Empirical Mode Decomposition (EMD) method \cite{huang1998empirical}, where the moving average was computed as a local average between an envelope connecting the maxima and one connecting the minima of the signal under study. The use of envelopes in an iterative way is the reason why the EMD--based algorithms are still lacking a rigorous mathematical framework.

The approximated moving average in FIF is computed, instead, by convolution of $s$ with a window/filter function $w$ \eqref{eq:Mov_Average}, as explained in \cite{lin2009iterative}. We recall that

\begin{definition}\label{def:window}
    A filter/window $w$ is a nonnegative and even function in $C^0\left([-L,\ L]\right)$, $L>0$, and such that $\int_\R w(z)\d z=\int_{-L}^{L} w(z)\d z=1$.
\end{definition}

The FIF method is detailed in the pseudo code Algorithm \ref{algo:FIF}, where DFT and iDFT stand for Discrete Fourier Transform and inverse DFT, respectively, and $\widehat{s}$ represents the Fourier transform of $s$.

\begin{algorithm}
\caption{\textbf{Fast Iterative Filtering} IMF = FIF$(s)$}\label{algo:FIF}
\begin{algorithmic}
\STATE IMF = $\left\{\right\}$
\WHILE{the number of extrema of $s$ $\geq 2$}
      \STATE  compute the filter length $L$ for $s(x)$ and the corresponding filter $w$
      \STATE  compute the DFT of the signal $s$ and of the filter $w$
      \WHILE{the stopping criterion is not satisfied}
                  \STATE  $\widehat{s}_{m+1} = (I-\textrm{diag}\left(\textrm{DFT}(w)\right))^{m}\textrm{DFT}(s)$
                  \STATE  $m = m+1$
      \ENDWHILE
      \STATE IMF = IMF$\,\cup\,  \{ \textrm{iDFT}\left(\widehat{s}_{m}\right)\}$
      \STATE $s=s-\textrm{iDFT}\left(\widehat{s}_{m}\right)$
\ENDWHILE
\STATE IMF = IMF$\,\cup\,  \{ s\}$
\end{algorithmic}
\end{algorithm}

The FIF algorithm assumes implicitly the periodicity of the signal $s$ at the boundaries, since it relies on the FFT. This is apparently an important limitation to the applicability of this technique to a generic signal. However, it is always possible to pre--extend any signal in order to make it to become periodical at the new boundaries. In \cite{stallone2020new} an algorithm for the signal pre--extension is proposed.

The FIF method contains two nested loops: an Inner Loop, and an Outer Loop.
The first IMF is computed repeating iteratively the steps contained in the inner loop until a predefined stopping criterion satisfied. Then, to produce the $2$-nd IMF, the same procedure is applied to the remainder signal $r=s-\textrm{IMF}_1$. This is what is done in the so called outer loop. Subsequent IMFs are produced iterating this last loop.

The algorithm stops when $r$ becomes a trend signal, meaning it has at most one local extremum.

A complete a priori convergence analysis of this technique has been done in \cite{cicone2020Direct}, where a theorem on the inner loop convergence and another one on the outer loop convergence have been proved.

Regarding the choice of the filter shape, following \cite{cicone2016adaptive}, and what has been done in many applications \cite{sharma2017automatic,li2018entropy,spogli2019role,stallone2020new,hossein2020disentangling}, in the following we opt for a Fokker-Planck filter, which has the nice property of being $C^\infty(\R)$ and compactly supported.

For the computation of the filter length $L$, following \cite{lin2009iterative}, we can use the following formula
\begin{equation}\label{eq:Unif_Mask_length}
L:=2\left\lfloor\xi \frac{N}{k}\right\rfloor
\end{equation}
where $N$ is the total number of sample points of the signal $s$, $k$ is the number of its extreme points, $\xi$ is a tuning parameter which needs to be tuned only when a new filter shape is selected (usually fixed around 1.6 for the Fokker-Planck filters), and $\left\lfloor \cdot \right\rfloor$ rounds a positive number to the nearest integer closer to zero. In doing so we are computing some sort of average highest frequency contained in $s$.

Another possible way could be the calculation of the Fourier spectrum of $s$ and the identification of its highest frequency peak. The filter length $L$ can be chosen to be proportional to the reciprocal of this value.

The computation of the filter length $L$ is an important step of the FIF technique. Clearly, $L$ is strictly positive and, more importantly, it is based solely on the signal itself. This last property makes the method nonlinear.

In fact, if we consider two signals $p$ and $q$ where $p\neq q$, assuming $\textrm{IMFs}(\bullet)$ represent the decomposition of a signal into IMFs by FIF, the fact that we choose the half support length based on the signal itself implies that, in general,

$$\textrm{IMFs}(p+q)\neq \textrm{IMFs}(p)+\textrm{IMFs}(q)$$

We point out that FIF algorithmic complexity is driven by the FFT calculation which is the bottle neck in this algorithm. Therefore its algorithmic complexity is $O(m\log(m))$, where $m$ is the length of the signal under study.

\subsection{Multivariate Fast Iterative Filtering method}

Given a $n$-dimensional signal evolving over time $s\in\R^n \times \R$, the idea behind the Multivariate Fast Iterative Filtering (MvFIF) algorithm is to first compute in some way a unique \emph{filter length} $L$, which represents half support length of the filter function $w$, and then use it to extract the first IMF from each of the $n$ channels separately via FIF.

Considering $s$ as a sequence of column vectors $\textbf{v}(t)=\left[v_{i}(t)\right]_{i=1,\ldots, n}$ rotating in $\R^n$, as $t$ varies in $\R$, we can compute the filter length using $\theta(t)$, angle of rotation of such vectors over time, defined as
\begin{equation}\label{eq:theta}
\theta(t)=\arccos\left(\frac{\textbf{v}(t)}{\left\|\textbf{v}(t)\right\|}\cdot\frac{ \textbf{v}(t-1)}{\left\|\textbf{v}(t-1)\right\|}\right)
\end{equation}
In particular we use as filter length $L$ the double average distance between subsequent extrema in $\theta(t)$.

We point out that this approach is very natural if we consider a multivariate IMF as a vector in $\R^n$ rotating around the time axis. Computing the double average distance between subsequent extrema in $\theta(t)$ allows to estimate the average scale of the highest frequency rotations embedded in the given signal.

If we assume that the signal $s$ is sampled over time at $m$ points, then $s$ is a matrix in $\R^{n\ \times\ m}$ and we use the notation $s=[\textbf{v}_1 \textbf{v}_2 \ldots \textbf{v}_m]$
where each $\textbf{v}_j$ is a column vector in $\R^n$, and
\begin{equation}\label{eq:s_row_vec}
    s=\left[\begin{array}{c}
               \textbf{u}_1  \\
         \textbf{u}_2  \\
         \ldots \\
          \textbf{u}_n
              \end{array}
       \right]
\end{equation}
where each $\textbf{u}_i$ is a row vector in $\R^m$.

The pseudo code of MvFIF is given in Algorithm \ref{algo:MVFIF} and a Matlab implementation is available online\footnote{\url{http://www.cicone.com}}.

\begin{algorithm}
\caption{\textbf{Multivariate Fast Iterative Filtering} IMF = MvFIF$(s)$}\label{algo:MVFIF}
\begin{algorithmic}
\STATE IMF = $\left\{\right\}$
\STATE compute $\theta(t)$ using \eqref{eq:theta}
\WHILE{the number of extrema of $\theta$ $\geq 2$}
      \STATE compute the filter length $L$ of the filter function $w$
      \STATE set $k=0$
      \WHILE{the stopping criterion is not satisfied}
            \FOR{ $i=1$ \TO $n$ }
                 \STATE  $\left(\widehat{\textbf{u}}_i^{(k)}\right)^T = \left(I-\textrm{diag}\left(\textrm{DFT}(w)\right)\right)^{k}\textrm{DFT}(\textbf{u}_i^T)$
            \ENDFOR
            \STATE  $k = k+1$
      \ENDWHILE
      \STATE IMF = IMF$\,\cup\,  \left\{ \left[\textrm{iDFT}\left(\widehat{\textbf{u}}_i^{(k)}\right)\right]_i\right\}$
      \STATE $s=s-\left[\textrm{iDFT}\left(\widehat{\textbf{u}}_i^{(k)}\right)\right]_i$
      \STATE compute $\theta(t)$ using \eqref{eq:theta}
\ENDWHILE
\STATE IMF = IMF$\,\cup\,  \{ s\}$
\end{algorithmic}
\end{algorithm}
In the pseudocode, DFT and iDFT stand for Discrete Fourier Transform and inverse DFT, respectively, whereas $\textbf{u}_i^{(k)}$ represents the value of the IMF corresponding to the $i$--th channel at the $k$--th step of the inner loop, and $\widehat{\textbf{u}}_i^{(k)}$ represents its Fourier transform.

Given $\delta>0$, we can define the following stopping criterion: $\exists N_0\in\N$ such that
\begin{equation}\label{eq:Discrete_Abs_StopCond}
\textrm{SC}=\max_{i=1,\ldots,\ n}\|\textbf{u}_i^{(k+1)}-\textbf{u}_i^{(k)}\|_2<\delta \quad \forall k\geq N_0
\end{equation}

Regarding the a priori convergence of the proposed method we are now ready to prove the following
\begin{theorem}[MvFIF convergence]
Given a signal $s\in\R^{n \times m}$, assumed to be periodical at the time boundaries, and a filter vector $w$ derived from a symmetric filter $h$ convolved with itself.
If we consider as filter length $L$ the double average distance between subsequent extrema in the function $\theta(t)$, defined in \eqref{eq:theta}, and if we
fix $\delta>0$, then, for the minimum $N_0\in\N$ such that the stopping criterion \eqref{eq:Discrete_Abs_StopCond} holds true $\forall k\geq N_0$, the first IMF of $s$ is given by
\begin{eqnarray}\label{eq:FIF_algo}
% \nonumber to remove numbering (before each equation)
 \textrm{IMF} &=& \left[\textrm{iDFT}\left((I-D)^{N_0}\textrm{DFT}(\textbf{u}_i^T)\right)^T\right]_{i=1,\ldots,\ n}
\end{eqnarray}
where $D$ is a diagonal matrix containing as entries the eigenvalues of the discrete convolution matrix $W$ associated with the filter vector $w$.
\end{theorem}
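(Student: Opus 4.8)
The core observation is that MvFIF, for a fixed outer-loop iteration, is nothing but $n$ copies of the scalar FIF inner loop, all sharing the \emph{same} filter $w$ (hence the same convolution matrix $W$ and the same diagonal matrix $D$ of its eigenvalues). So the strategy is to (i) argue that the outer loop executes finitely many times, (ii) for each such outer iteration, invoke the already-established scalar FIF inner-loop convergence result channelwise, and (iii) assemble the channelwise limits into the stated vector-valued formula \eqref{eq:FIF_algo}, checking that the stopping criterion \eqref{eq:Discrete_Abs_StopCond}, which couples the channels through the $\max$, is reached in finite time.

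\textbf{Step 1: the filter length is well defined and positive.} The hypothesis is that $\theta(t)$ from \eqref{eq:theta} has at least two extrema (this is the outer-\textbf{while} guard), so the ``double average distance between subsequent extrema of $\theta$'' is a finite positive integer; thus $L>0$ and $w$ is a genuine filter in the sense of Definition~\ref{def:window}. Since $w$ is built as $h$ convolved with itself for a symmetric $h$, the associated circulant (discrete convolution) matrix $W$ is symmetric positive semidefinite, and more to the point $\widehat{w}=\textrm{DFT}(w)$ is real with entries in $[0,1]$: indeed $\widehat{w}=|\widehat{h}|^2$ up to normalization, and the normalization $\int w = 1$ forces the zero-frequency entry to be $1$ and all entries to lie in $[0,1]$. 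This is exactly the spectral condition under which the scalar FIF inner loop was shown to converge in \cite{cicone2019IF_num_an}, so I would quote that hypothesis-check verbatim.

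\textbf{Step 2: channelwise inner-loop convergence.} For the current outer iteration, write $D=\textrm{diag}(\widehat{w})$. For each channel $i$, the inner loop produces $\widehat{\textbf{u}}_i^{(k)} = (I-D)^k\,\textrm{DFT}(\textbf{u}_i^T)$. Because every eigenvalue of $I-D$ lies in $[0,1]$, the powers $(I-D)^k$ converge entrywise: on frequencies where $\widehat{w}_\ell<1$ the factor $(1-\widehat{w}_\ell)^k\to 0$, and on frequencies where $\widehat{w}_\ell=1$ it is identically $0$ from $k=1$ on, while on frequencies where $\widehat{w}_\ell=0$ it is identically $1$. Hence $(I-D)^k$ converges to the orthogonal projector $P$ onto the span of the Fourier modes killed by $w$, and $\|\widehat{\textbf{u}}_i^{(k+1)}-\widehat{\textbf{u}}_i^{(k)}\|_2 = \|(I-D)^k D\,\textrm{DFT}(\textbf{u}_i^T)\|_2 \to 0$ geometrically in the contracting directions. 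By Parseval this transfers to $\|\textbf{u}_i^{(k+1)}-\textbf{u}_i^{(k)}\|_2\to 0$ for each $i$. Taking the maximum over the finitely many channels $i=1,\dots,n$, the quantity $\textrm{SC}$ in \eqref{eq:Discrete_Abs_StopCond} tends to $0$; therefore, given $\delta>0$, a minimal $N_0\in\N$ with $\textrm{SC}<\delta$ for all $k\ge N_0$ exists. Evaluating the closed form at $k=N_0$ gives precisely the vector in \eqref{eq:FIF_algo}, which establishes the statement for the first IMF.

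\textbf{Step 3: the outer loop (for completeness / finite termination).} After subtracting the first IMF, the remainder $s'=s-\textrm{IMF}$ is again a signal in $\R^{n\times m}$ and $\theta$ is recomputed; the loop continues only while $\theta$ still has $\ge 2$ extrema. Here I would argue as in the scalar outer-loop theorem of \cite{cicone2019IF_num_an,cicone2020Direct}: each extracted IMF removes the highest-frequency content, so the filter length strictly increases (equivalently the number of extrema of $\theta$ strictly decreases) at each outer pass; since $m$ is finite this can happen only finitely often, after which $\theta$ has at most one extremum and the algorithm appends the residual $s$ and stops. Hence MvFIF terminates in finitely many steps on any $s\in\R^{n\times m}$.

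\textbf{Main obstacle.} The genuinely new point over the scalar theory is the coupling introduced by (a) the shared, signal-dependent filter length computed from the \emph{rotation angle} $\theta(t)$ rather than from a single channel, and (b) the joint stopping criterion through $\max_i$. Point (b) is harmless once per-channel convergence is in hand — a finite max of sequences tending to $0$ still tends to $0$ — so the real care is in Step~1/Step~3: making sure $\theta$ is well defined (the vectors $\textbf{v}(t)$ must be nonzero for the normalization in \eqref{eq:theta}, and the $\arccos$ argument must stay in $[-1,1]$, which it does by Cauchy--Schwarz) and that ``number of extrema of $\theta \ge 2$'' really does translate into a positive, finite, and eventually-terminating sequence of filter lengths. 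I expect the bulk of the write-up to be a careful restatement of the scalar FIF convergence argument applied channelwise, with the novelty concentrated in verifying that $\theta$-based filter-length selection inherits the same monotonicity/finiteness properties used in the scalar outer-loop proof.
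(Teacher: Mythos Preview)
Your proposal is correct and follows essentially the same approach as the paper's proof: diagonalize the circulant convolution matrix $W$ via the Fourier basis, use the hypothesis $w=h*h$ with $h$ symmetric to place all eigenvalues of $W$ in $[0,1]$, deduce that $\|\textbf{u}_i^{(k+1)}-\textbf{u}_i^{(k)}\|_2=\|UD(I-D)^k U^T\textbf{u}_i\|_2\to 0$ monotonically for each channel, and then take $N_0=\max_i N_i$ to satisfy the joint stopping criterion \eqref{eq:Discrete_Abs_StopCond}. Your Step~3 on outer-loop termination is extra: the theorem as stated concerns only the first IMF, and the paper's proof accordingly stops after the inner-loop argument.
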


\begin{proof}
We assume we have a signal $s\in\R^{n \times m}$, and a filter vector $w\in\R^m$, derived from a symmetric filter $h$ convolved with itself and such that its filter length $L$ equals the double average distance between subsequent extrema in the function $\theta(t)$ defined in \eqref{eq:theta}. We can construct the $W\in\R^{m\times m}$ discrete convolution operator associated with $w$. It can be a circulant matrix, Toeplitz matrix or it can have a more complex structure. Its structure depends on the way we extend the signal outside its boundaries. Since we are assuming that we have periodic extension of signals outside the time boundaries of the signal, then $W$ is a circulant matrix given by
\begin{equation}\label{eq:W}
   W=\left[
         \begin{array}{cccc}
           c_0 & c_{m-1} & \ldots & c_1 \\
           c_{1} & c_0 & \ldots & c_2 \\
           \vdots & \vdots & \ddots & \vdots \\
           c_{m-1} & c_{m-2} & \ldots & c_0 \\
         \end{array}
       \right]
\end{equation}
where $c_j\geq 0$, for every $j=0,\ldots, \ m-1$, and $\sum_{j=0}^{m-1}c_j=1$.
Each row contains a circular shift of the entries of a chosen vector filter $w$.

It is well known that this matrix is diagonalizable via a unitary matrix $U$ which contains as columns the so called Fourier Basis

\begin{eqnarray}\label{eq:Eigenvectors}
% \nonumber to remove numbering (before each equation)
\nonumber u_p\ =\ \frac{1}{\sqrt{m}} \left[1,\ e^{-2\pi i p\frac{1}{m}},\ldots,\ e^{-2\pi i p\frac{m-1}{m}}\right]^T \\
 p\ =\ 0,\ldots,\ m-1
\end{eqnarray}
and, in particular, $W=UDU^T$ where $D$ is a diagonal matrix whose entries are the eigenvalues of $W$, which are given by
\begin{equation}\label{eq:Lambdas}
\lambda_p\ =\ \sum_{q=0}^{m-1} c_{1q}e^{-2\pi i p \frac{q}{n}}  \qquad  \qquad p\ =\ 0,\ldots,\ m-1
\end{equation}
that are equivalent to $\textrm{DFT}(w)$, in the problem under study.

Furthermore, given the hypotheses on $w$, it follows that every $\lambda_p$ is contained in the positive interval $[0,1]$.

Therefore, for every fixed $i=1,\ldots,\ n$, it follows that
\begin{eqnarray}
% \nonumber to remove numbering (before each equation)
 \nonumber  \textbf{u}_i^{(k)}-\textbf{u}_i^{(k+1)} = (I-W)^{k}\textbf{u}_i-(I-W)^{k+1}\textbf{u}_i = & \\
 \nonumber U(I-D)^{k}(I-(I-D))U^T\textbf{u}_i= &\\
  UD(I-D)^{k}\widetilde{\textbf{u}_i} \longrightarrow \textbf{0} \quad \textrm{ as } m\longrightarrow \infty&
\end{eqnarray}
where $\widetilde{\textbf{u}_i}=U^T \textbf{u}_i$.

In particular we have that $\|UD(I-D)^{k}\widetilde{\textbf{u}_i}\|_2$ decreases monotonically to 0 and hence
for every fix $\delta>0$, $\exists$~$N_i\in\N$ such that $\|\textbf{u}_i^{(k)}-\textbf{u}_i^{(k+1)}\|_2<\delta$, $\forall k\geq N_i$.

If we define $N_0=\max_{i=1,\ldots,\ n} N_i$, then the stopping criterion \eqref{eq:Discrete_Abs_StopCond} holds true $\forall k\geq N_0$, and the conclusion of the theorem follows directly.
\end{proof}
We recall that computing the DFT of a vector is equivalent to multiplying it on the left by $U^T$, whereas the iDFT is equivalent to multiplying it on the left by~$U$.

The MvFIF approach has several advantages with respect to previously developed methods proposed in the literature.

First of all there is no need to make any assumption on the signal under investigation. This is required, instead, in methods based on optimization like, for instance, the Multivariate Variational Mode Decomposition (MVMD) method \cite{ur2019multivariate}, or methods based on wavelets, e.g.  the Synchrosqueezing--based time-frequency analysis method \cite{ahrabian2015synchrosqueezing} and the empirical wavelet transform based approach for multivariate data processing, like the multivariate projection-based EWT \cite{singh2018empirical,tripathy2020}, where a basis has to be selected a priori. 

Moreover, MvFIF, as for EMD based methods, does not require to specify a priori the number of IMFs to be produced by the method. Whereas, methods like MVMD,  MSSA, and the more recent multivariate sliding mode SSA \cite{jain2020}, require this number to be set before running the decomposition.

Furthermore the proposed method has no limitations on the number of channels, like in the bivariate and trivariate extension of EMD proposed in the past.

This new algorithm produces directly a uniquely defined decomposition, whereas the MEMD \cite{rehman2009Multivariate} and its variants produce the final decomposition as the average among all the decompositions of many wisely chosen different projections of the original signal. 

Another important aspect regards its computational complexity. As we already mentioned, FIF computational complexity is $O(m\log(m))$, where $m$ is the length of the single channel signal under study. This is due to the acceleration of the Iterative Filtering moving average calculation via FFT, as detailed in \cite{cicone2020Direct}. Therefore, MvFIF computational complexity is $O(n m\log(m))$, where $n$ is the number of channels and $m$ is the number of time samples of the signal. This makes the proposed approach extremely fast and competitive with respect to any other method proposed so far in the literature, as shown also in the following numerical examples.

Finally, since MvFIF is a direct extension of FIF algorithm for multivariate data, it inherits all the important advantages and nice properties of the original FIF. Such as its fast convergence as well as its stability to noise, which have been extensively studied in \cite{cicone2016adaptive,cicone2020Direct}. In the rest of this work we will show and detail the performance of MvFIF by means of numerical examples.

\section{Performance of Multivariate Fast Iterative Filtering algorithm}\label{sec:Performance}

To evaluate the performance of the proposed method, we report in the following detailed results of simulations and experiments we conducted on a wide variety of multivariate signals.

We will first of all show the ability of MvFIF\footnote{MvFIF code is available at \url{www.cicone.com}} to separate multivariate modulated oscillations and to align common frequency scales across multiple data channels. These are fundamental features if we want to study real life applications involving multivariate signals.
In particular, we test the method against a real world bivariate data and show the ability of this method in producing multivariate modulated oscillations.
Furthermore we test it against a synthetic signal containing a combination of nonstationary components across different channels. We compare this result with those obtained applying the original FIF to each channel separately.

Subsequently, we show the effect of noise in the signal on mode--alignment in the MvFIF decomposition, comparing its performance with the one of the MEMD\footnote{MEMD code is available at \url{https://www.commsp.ee.ic.ac.uk/~mandic/research/emd.htm}} algorithm.

Finally, we test the proposed method on real world data sets which include EEG multichannel signals and the Earth magnetic field measurements, and we compare the performance of MvFIF with the one of the main methods proposed so far in the literature.

All the following tests have been conducted on a laptop equipped with an Intel Core i7-8550U CPU, 1.80GHz, 16.0 GB RAM, Windows 10 Pro, Matlab R2020a.

\subsection{Separation of Multivariate Modulated Oscillations}

We start by demonstrating the ability of the proposed method to separate multivariate modulated oscillations from a real bivariate data set. To this aim, we consider the multichannel signal analyzed in \cite{ur2019multivariate} which consists of position records, East and North displacement in kilometers, of a subsurface oceanographic float that was deployed in North Atlantic ocean to track the trajectory of salty water flowing from the Mediterranean Sea. The measurements are part of the Eastern Basin experiment stored in the World Ocean Circulation Experiment Subsurface Float Data Assembly Center (WFDAC) which can be downloaded at \url{http://wfdac.whoi.edu}.

\begin{figure}%
    \centering
    \subfloat{{\includegraphics[width=0.5\textwidth]{./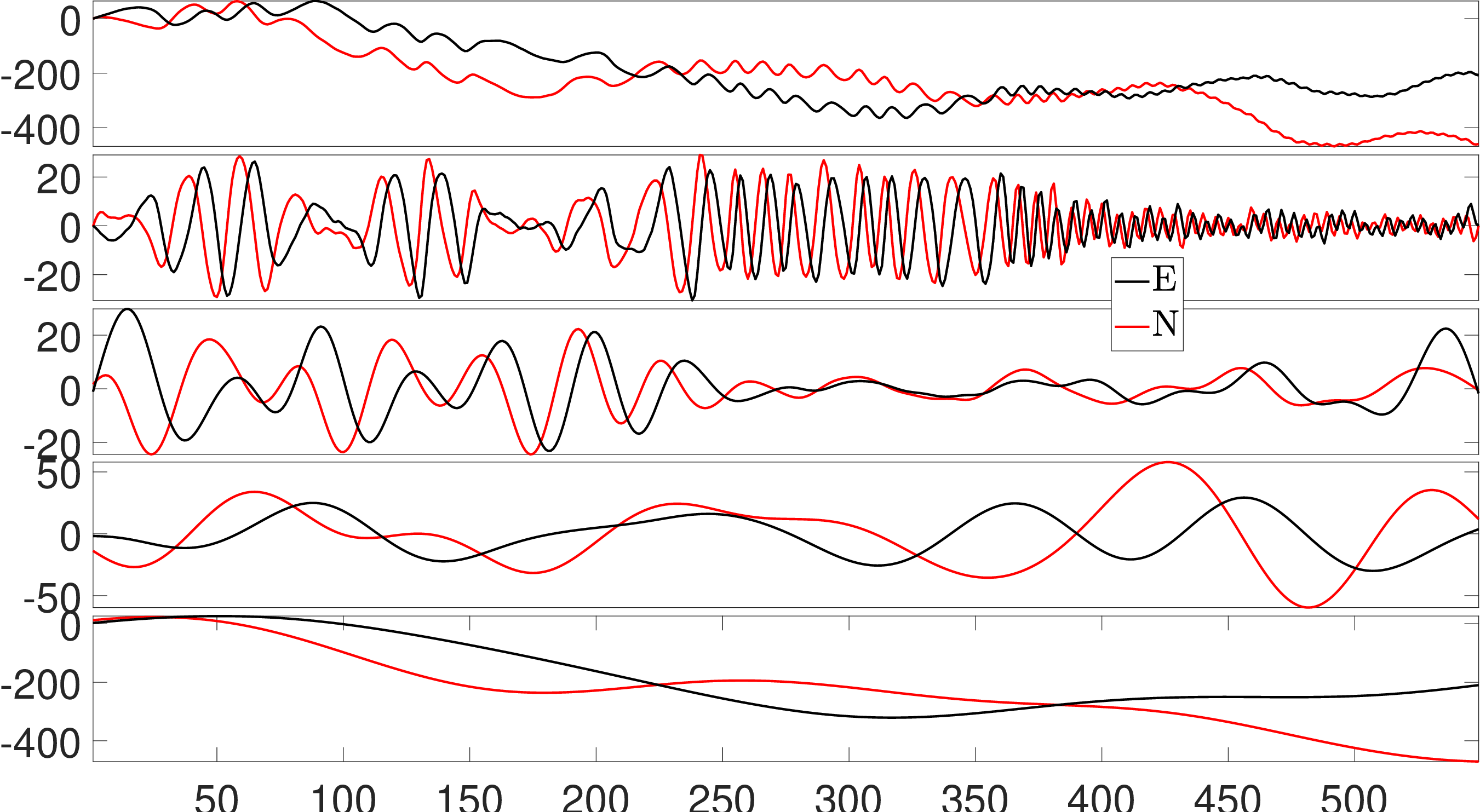} }}\\
    \caption{MvFIF decomposition of the subsurface float data measured in km}\label{fig:Ex1_IMFs}
\end{figure}

\begin{figure}%
    \centering
    \subfloat{{\includegraphics[width=0.24\textwidth]{./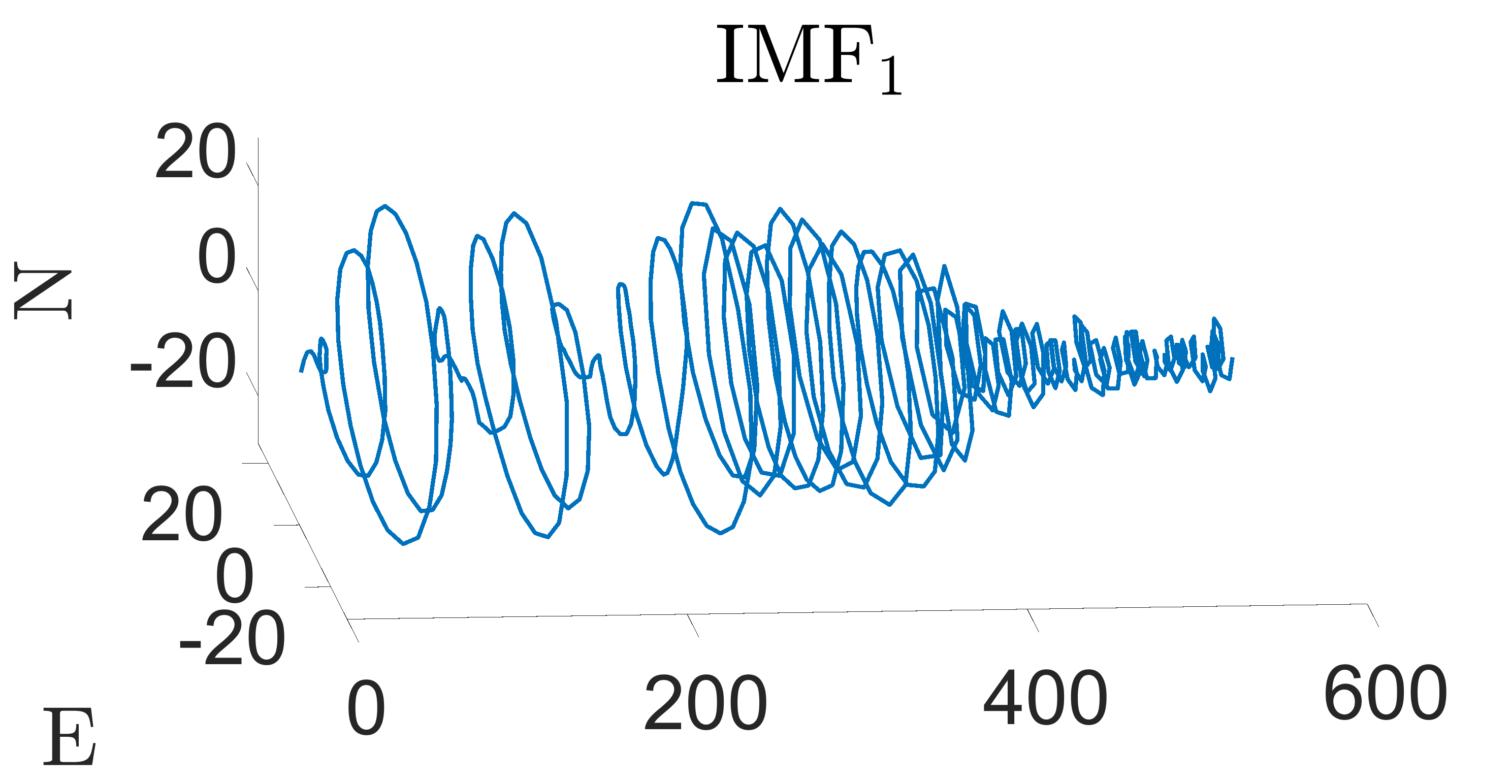} }}~
    \subfloat{{\includegraphics[width=0.24\textwidth]{./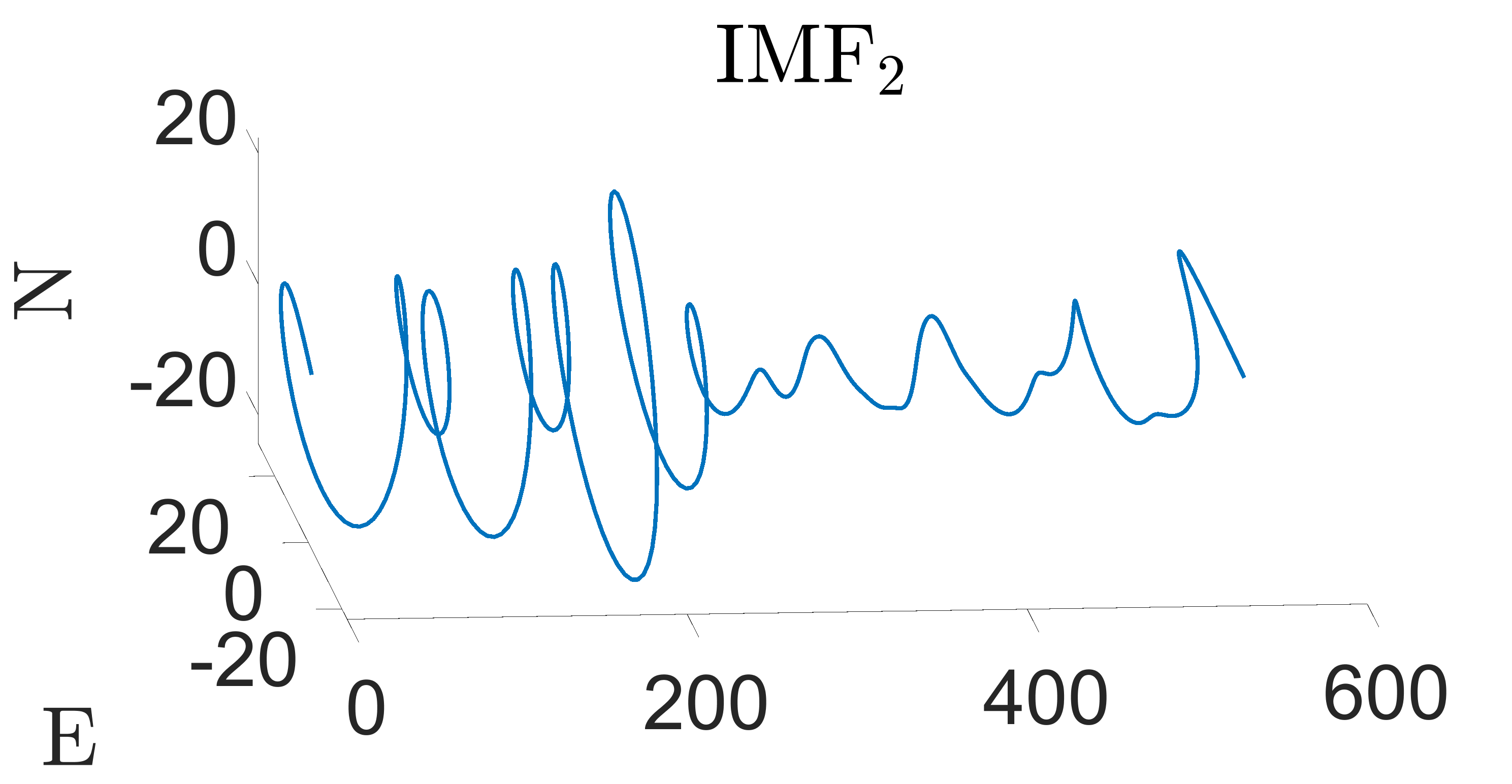} }}\\
    \subfloat{{\includegraphics[width=0.24\textwidth]{./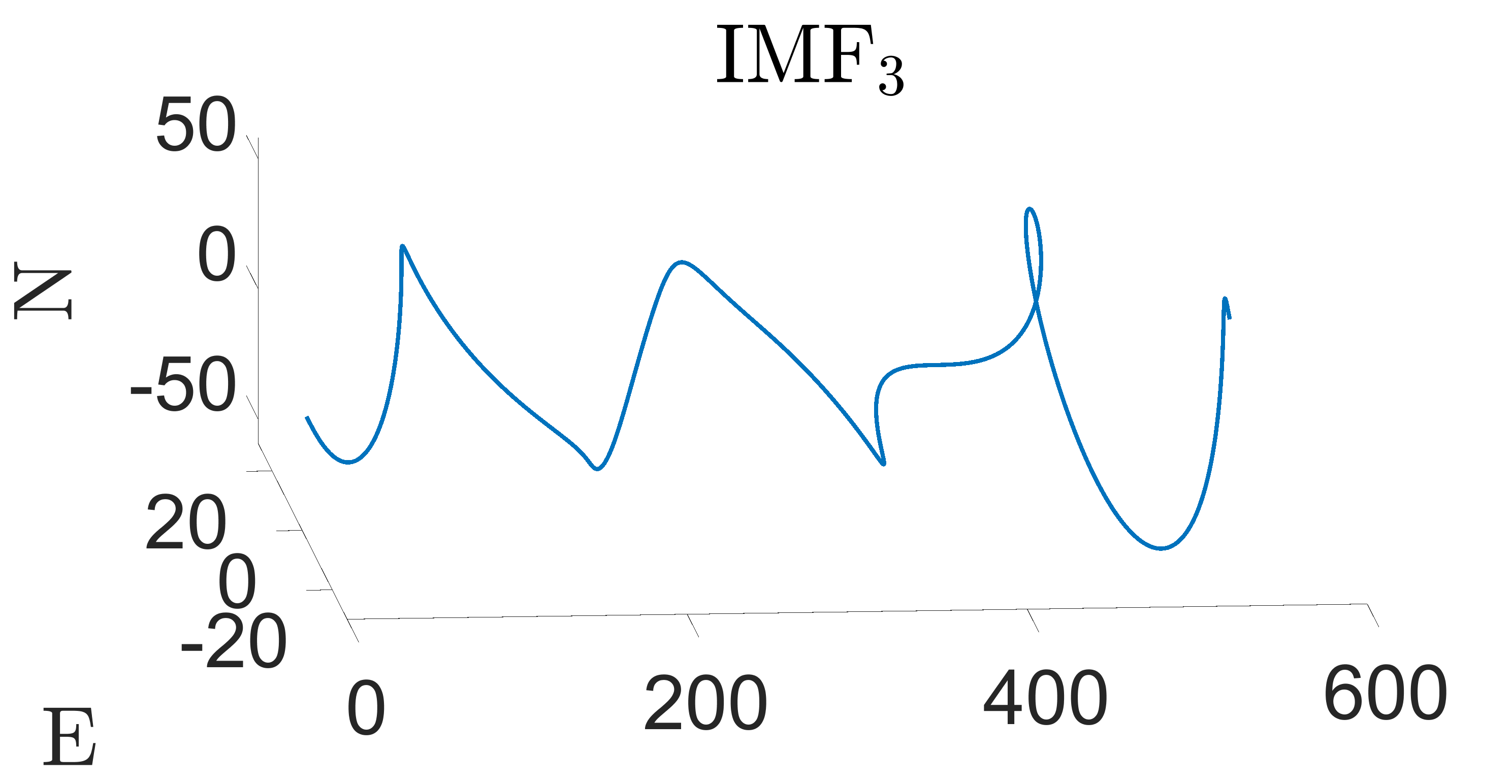} }}~
    \subfloat{{\includegraphics[width=0.24\textwidth]{./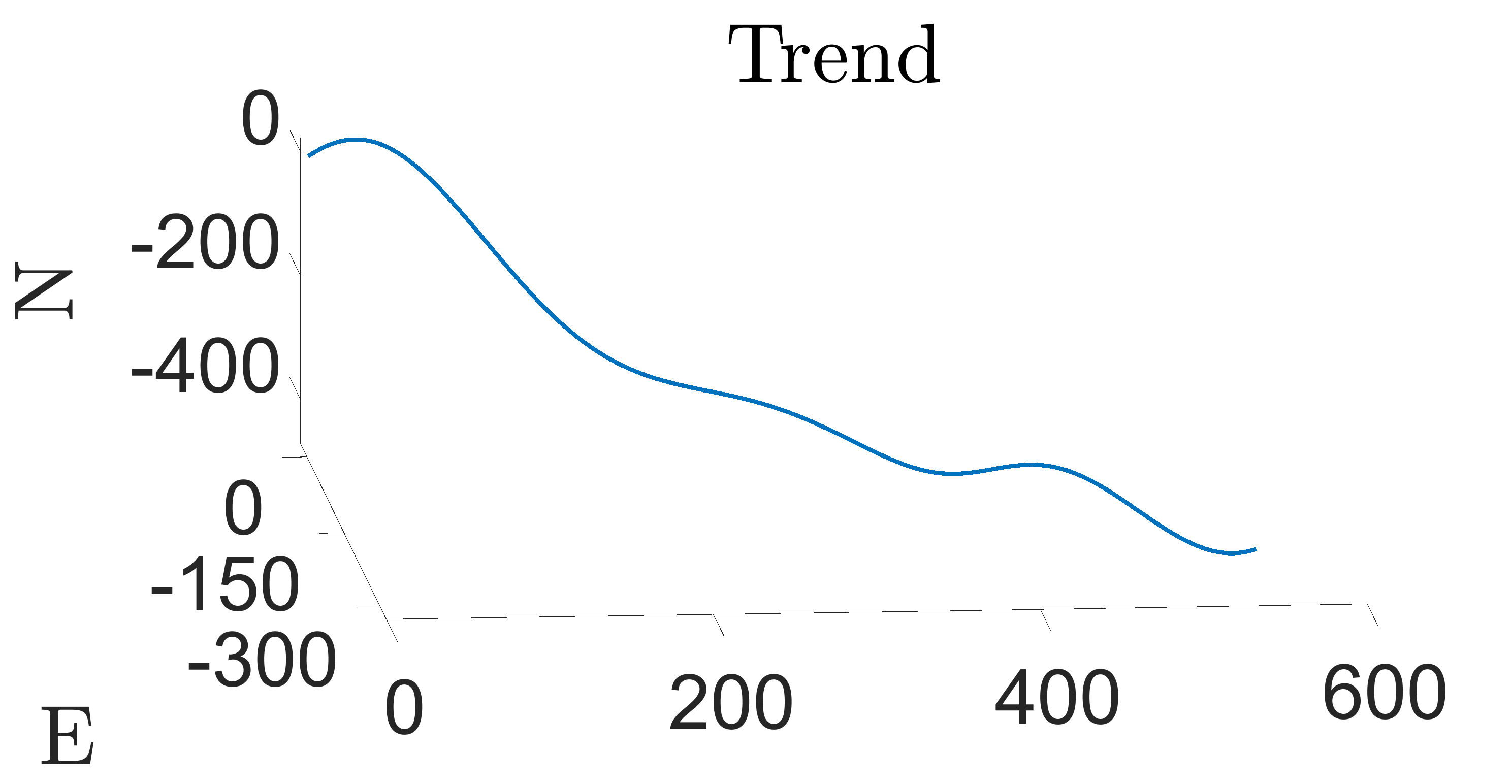} }}
    \caption{Bivariate IMFs plotted in 3D}\label{fig:Ex1_IMFs_3D}
\end{figure}

In Figure \ref{fig:Ex1_IMFs} top row we plot the two input channels. If we apply the proposed MvFIF method to this bivariate data set we produce a decomposition which contains 3 principal multivariate modulated oscillations, the so called IMFs, and a trend. This decomposition is shown as time plots in Figure \ref{fig:Ex1_IMFs} from the second row downward, and in Figure \ref{fig:Ex1_IMFs_3D} as 3D rotating modes over time. If we compare the 3D plots with the corresponding time plots of Figure \ref{fig:Ex1_IMFs}, it is evident that joint or common frequency scales across data channels explain the presence of rotations or multivariate modulated oscillations in the 3D plots of the data, which may correspond to coherent vortexes.
The non--rotating trend and sections of some IMFs are characterized by the absence of a joint frequency scale in the channels. The results produced perfectly match what was originally reported in \cite{ur2019multivariate}. This real life example shows the ability of MvFIF to properly separate multivariate oscillations and guarantee the alignment of common frequencies across multiple channels of a single IMF.

\subsection{Frequencies alignment property}\label{sec:freq-align}

In this example we further investigate the ability of MvFIF to guarantee the alignment of common oscillations across multiple channels of a single IMF. This is a fundamental feature when it comes to real life applications, where misalignment of common frequencies across multiple channels can impair a meaningful multivariate time--frequency analysis of the signal \cite{looney2009multiscale}.

We consider, in particular, a bivariate synthetic example which contains one common oscillation in both channels plus distinct nonstationary oscillations with different frequency ranges in each channel.
The first channel contains the following contributions

{\scriptsize
$$c_1(t)=\frac{1}{2}t+\left(\frac{t}{30}+\frac{3}{5}\right)\sin\left(2\pi t+\frac{\pi}{2}\right)+\left(2-\frac{t}{30}\right)\cos\left(\frac{2}{10}\pi t^{1.3}\right)$$
}
whereas the second channel is given by
{\scriptsize
$$c_2(t)=-\frac{1}{5}t+\left(2-\frac{t}{30}\right)\sin\left(2\pi t\right)+\left(\frac{1}{2}+\frac{t}{30}\right)\sin\left(6\pi \left(\frac{1}{20}t^{1.5}+t\right)\right)$$
}
The two channels are plotted as time plots in the top row of Figure \ref{fig:Ex2_IMFs_MvFIF}.
\begin{figure}%
    \centering
    \subfloat{{\includegraphics[width=0.5\textwidth]{./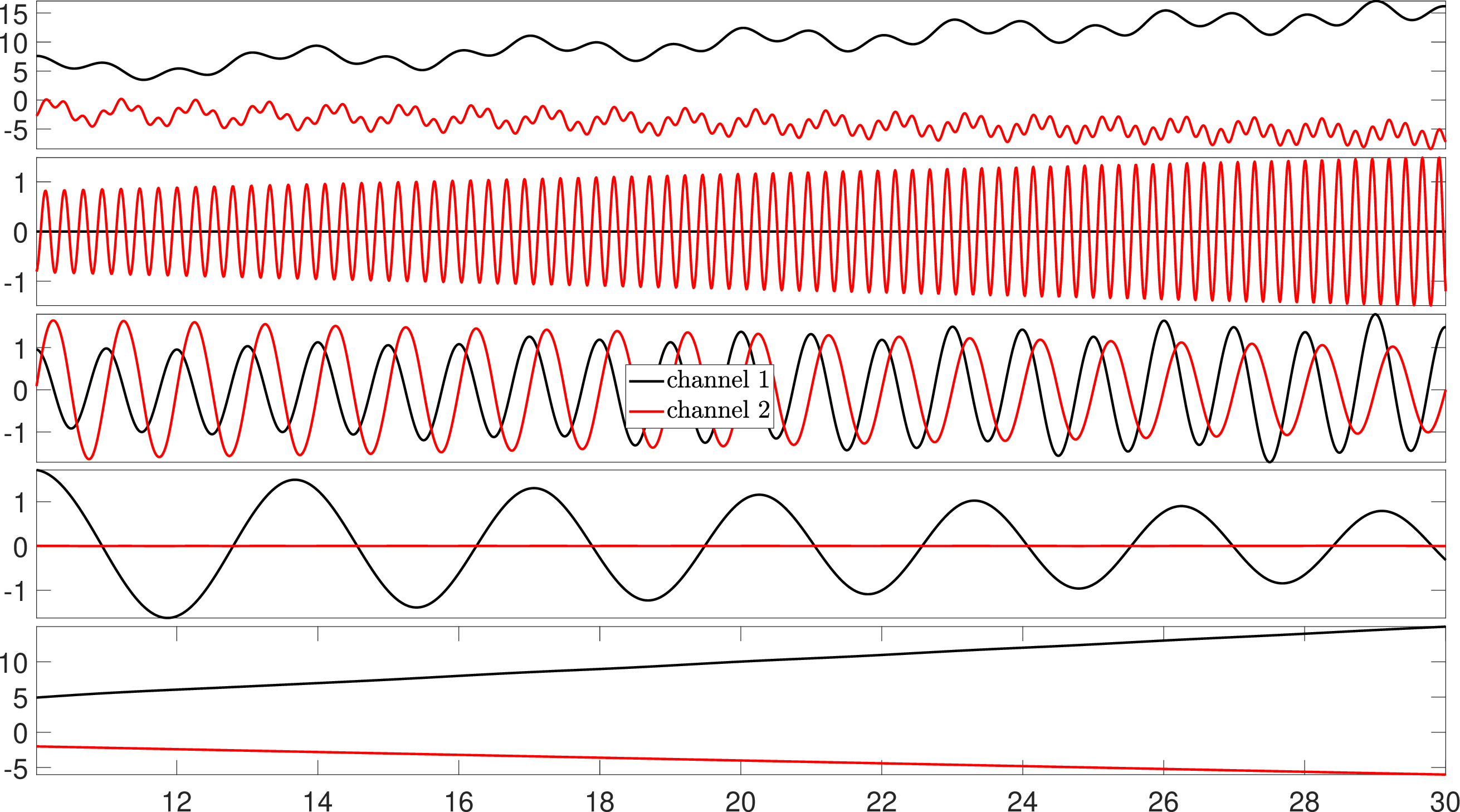} }}
    \caption{MvFIF decomposition of a bivariate artificial signal. It is evident the alignment of common oscillations across multiple channels of each IMF}\label{fig:Ex2_IMFs_MvFIF}
\end{figure}

The MvFIF decomposition is shown in Figure \ref{fig:Ex2_IMFs_MvFIF}, from the second row downward. From this result it is possible to observe that all frequencies are correctly aligned. In particular the second IMF, depicted in the third row, contains, as expected, the $2$ Hz components which are present in both channels with a phase shift of $\frac{\pi}{2}$ between the channels. The frequency alignment in the MvFIF decompositions is guaranteed by the uniform selection of the filter length among all the channels, as described in Section \ref{sec:MvFIF}.

\begin{figure}%
    \centering
    \subfloat{{\includegraphics[width=0.5\textwidth]{./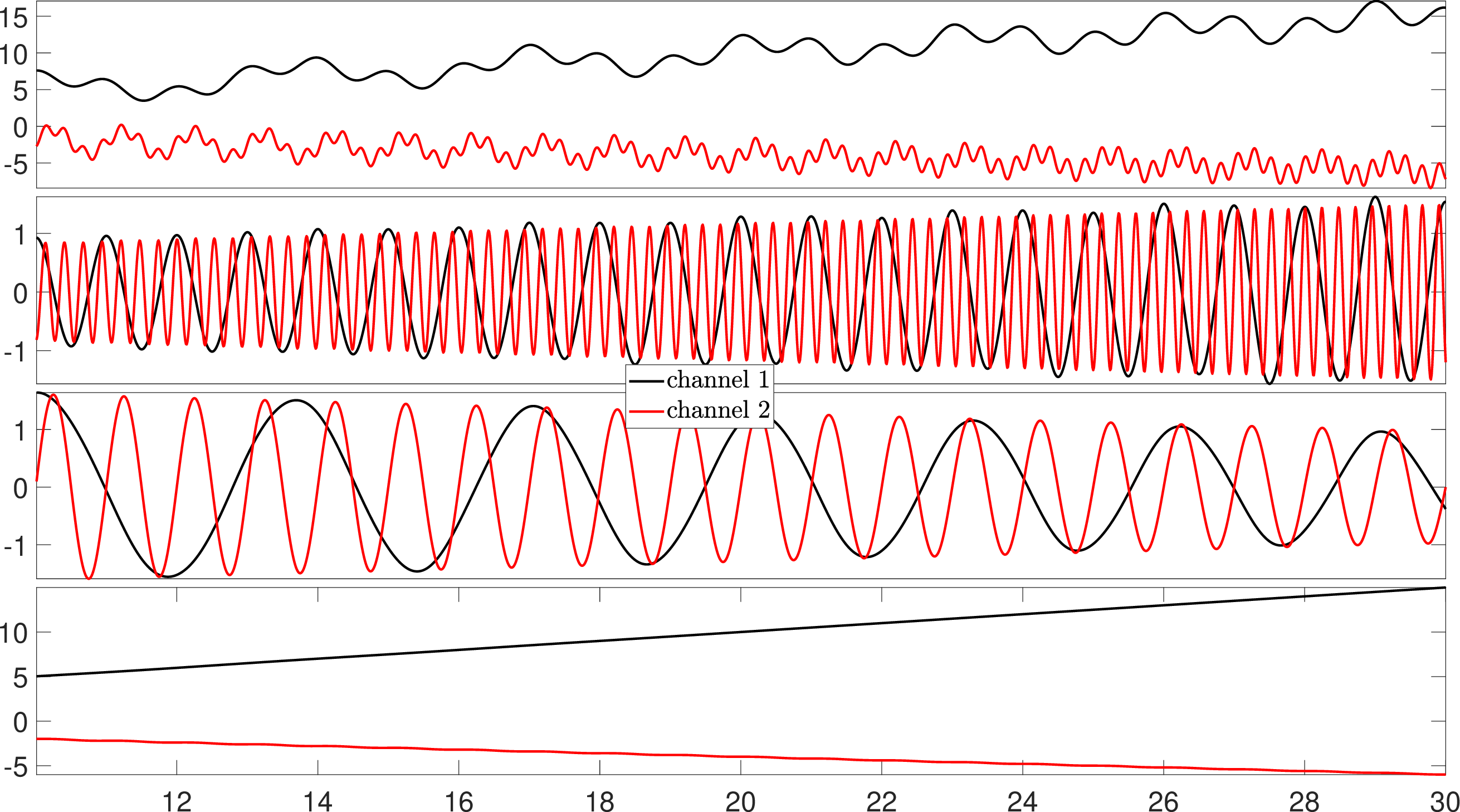} }}
    \caption{FIF decomposition of the same bivariate artificial signal with clear misalignment of the frequencies}\label{fig:Ex2_IMFs_FIF}
\end{figure}

In Figure \ref{fig:Ex2_IMFs_FIF}, we present the result of the decomposition of the same multivariate signal as obtained by means of the standard FIF method when applied to each channel separately. It is evident that the frequency alignment across channels is lost in this case. In particular we notice that the common frequency of $2$ Hz, which is present in both channels, is no more aligned. In fact it is now contained  in the first IMF for the first channel and the second IMF for the second channel.

\subsection{Filterbanks for white Gaussian noise}

The EMD and MEMD had been shown to exhibit a quasi--dyadic filterbank structure for white Gaussian noise (wGn) \cite{flandrin2004empirical}, similarly to what was done regarding the wavelet transform and its dyadic filterbank property. This is one of the key ingredients that have contributed to the success of the wavelet transform first and to the EMD after in a wide variety of applications.

In the following, we study the filterbank properties of the proposed method. We point out that, even though the Iterative Filtering method has been extensively studied in the past \cite{cicone2016adaptive,cicone2020Direct}, its filterbank properties have never been analyzed before.

\begin{figure}%
    \centering
    \subfloat{{\includegraphics[width=0.5\textwidth]{./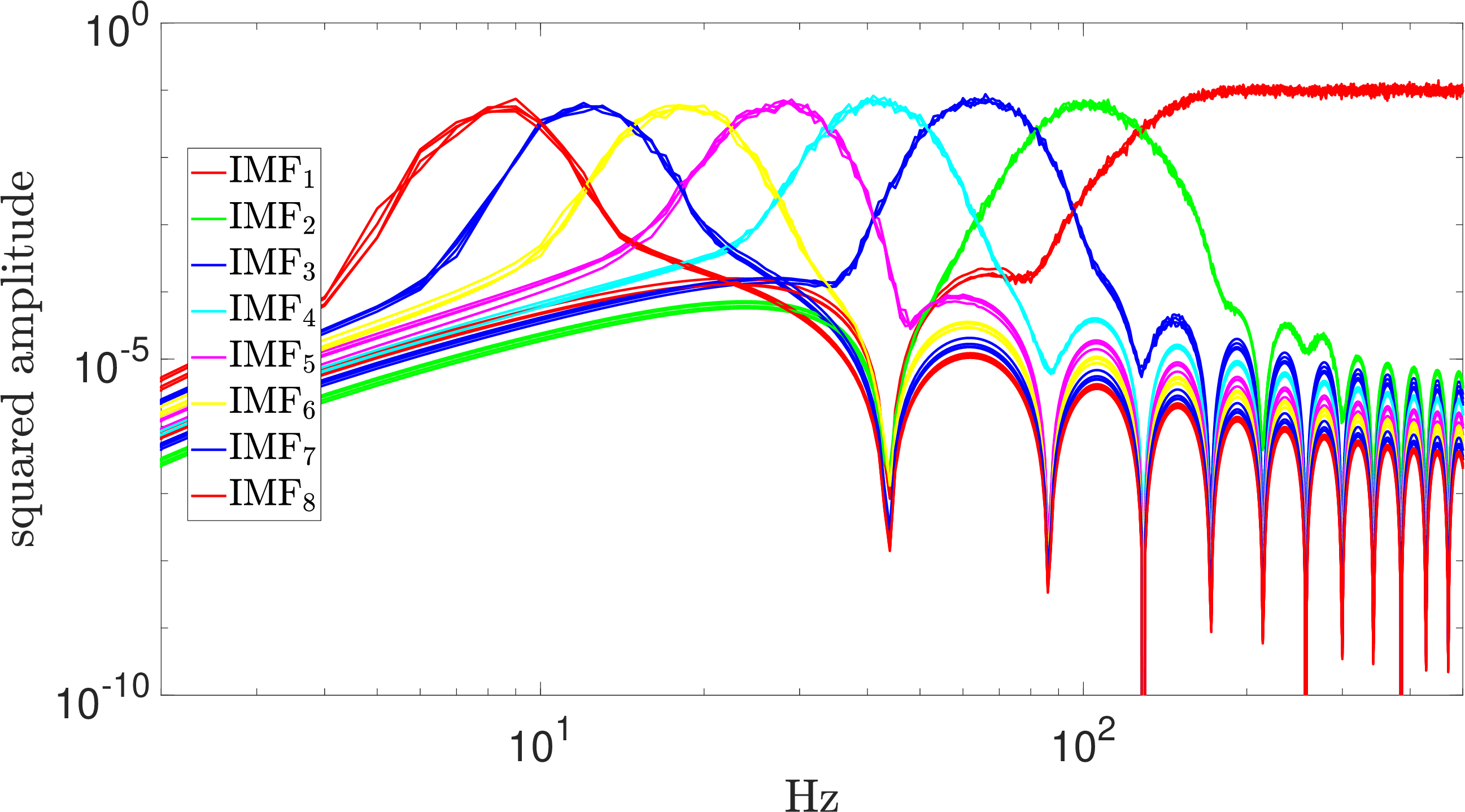} }}
    \caption{MvFIF filterbank for white Gaussian noise}\label{fig:Ex3_PSD_MvFIF}
\end{figure}

\begin{figure}%
    \centering
    \subfloat{{\includegraphics[width=0.5\textwidth]{./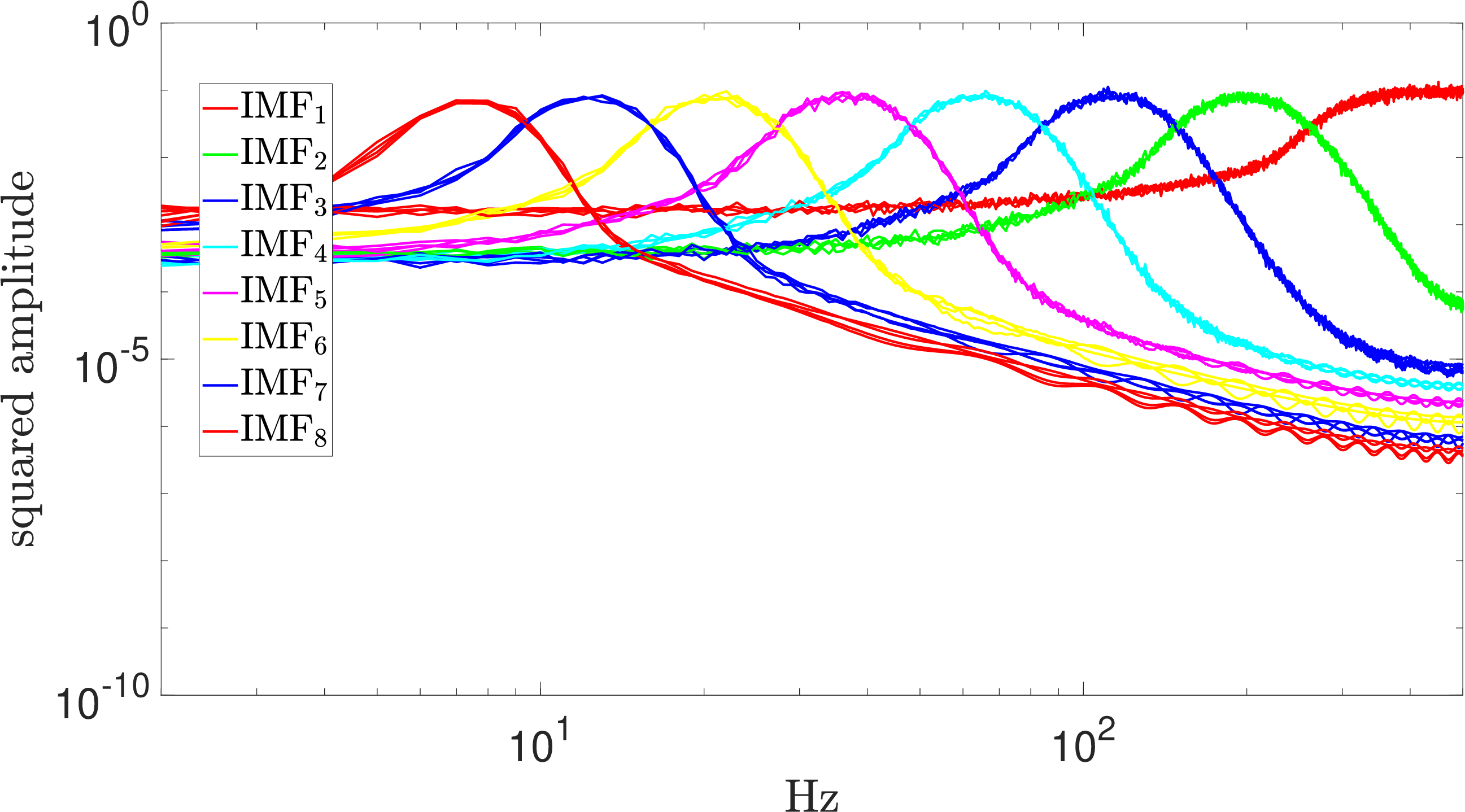} }}
    \caption{MEMD  filterbank for white Gaussian noise}\label{fig:Ex3_PSD_MEMD}
\end{figure}

We consider 100 realizations of a four--channel wGn process containing 1000 sample points each and we decompose them with MvFIF and MEMD methods. The power spectral density (PSD) plots averaged over 100 realizations are plotted in Figures \ref{fig:Ex3_PSD_MvFIF} and \ref{fig:Ex3_PSD_MEMD} for MvFIF and MEMD, respectively. By comparing the two set of plots, we can observe that both methods exhibit quasi--dyadic filterbank structure for wGn as evident by almost similar bandwidths of different IMFs in the log--frequency scale. From these figures it is also possible to notice that, as expected from the previous results, there is a good frequency alignment among different channels within corresponding IMFs.

We point out that the first IMF in the MvFIF tends to contain a wide interval of frequencies. This is probably due to the FIF mask length tuning parameter, the so called $\xi$ input parameter introduced in \eqref{eq:Unif_Mask_length}. From the comparison of Figure \ref{fig:Ex3_PSD_MvFIF} and Figure \ref{fig:Ex3_PSD_MEMD}, we observe also that the MvFIF PSD curves tend to have smaller values far away from their peaks with a peculiar shape around their local minima. The shape of the PSD curves is probably induced by the spectral properties of the filter used in the MvFIF decomposition.
The analysis of how the decomposition is influenced by the selection of the $\xi$ input parameter as well as by the choice of a filter shape has never been systematically studied in the past. This topic is out of the scope of this work, and we plan to study it in a future work.

From a computational time prospective, we report that the MvFIF required $\approx 1.57$ s to produce the multivariate decomposition of the ensemble of 4 channels wGn signals, whereas MEMD required $\approx 480.70$ s.

\subsection{Quasi--Orthogonality of MvFIF IMFs}

In standard signal processing techniques, like the Short Time Fourier Transform (STFT) and wavelet transform, the decomposition of a signal is done by means of predefined basis functions which are chosen to be orthogonal. This feature of the selected basis ensures that there is no ``leakage'' of information across different IMFs. Iterative decomposition methods like EMD and FIF are, instead, adaptive in nature. In particular, they do not require to make any assumption on the basis required to decompose a signal. Therefore we can only test for orthogonality after the decomposition has been produced.

In this section, we study the orthogonality of MvFIF IMFs using as test signal the ensemble of 100 realizations of a four-channel wGn data set which we studied previously to investigate the filterbank structure of the proposed algorithm. To measure the orthogonality of the IMFs we define the following \emph{correlation coefficient}
$$c_{ij}=\frac{\textrm{cov}(\IMF_i,\ \IMF_j)}{\sigma_i\sigma_j} $$
where $\textrm{cov}(\cdot)$ represents the covariance between two signals and $\sigma_i$ is the standard deviation of the i--th IMF. If $c_{ij}$ is close to zero the i--th and j--th IMFs can be considered to be quasi-orthogonal. If, instead, this value is close to one, the two IMFs are strongly correlated.

\begin{figure}%
    \centering
    \subfloat{{\includegraphics[width=0.245\textwidth]{./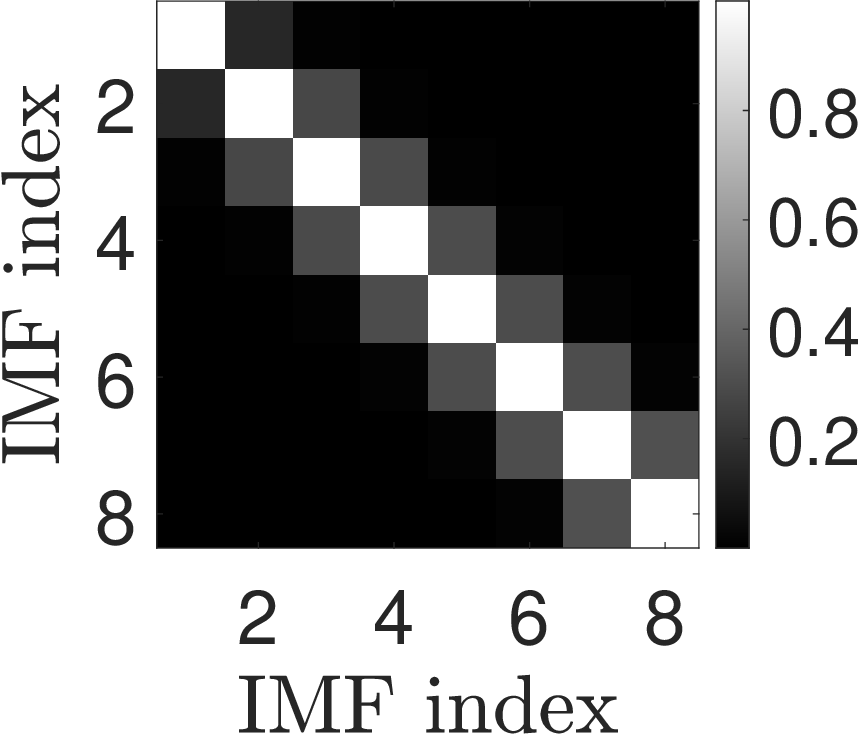} }}~\subfloat{{\includegraphics[width=0.245\textwidth]{./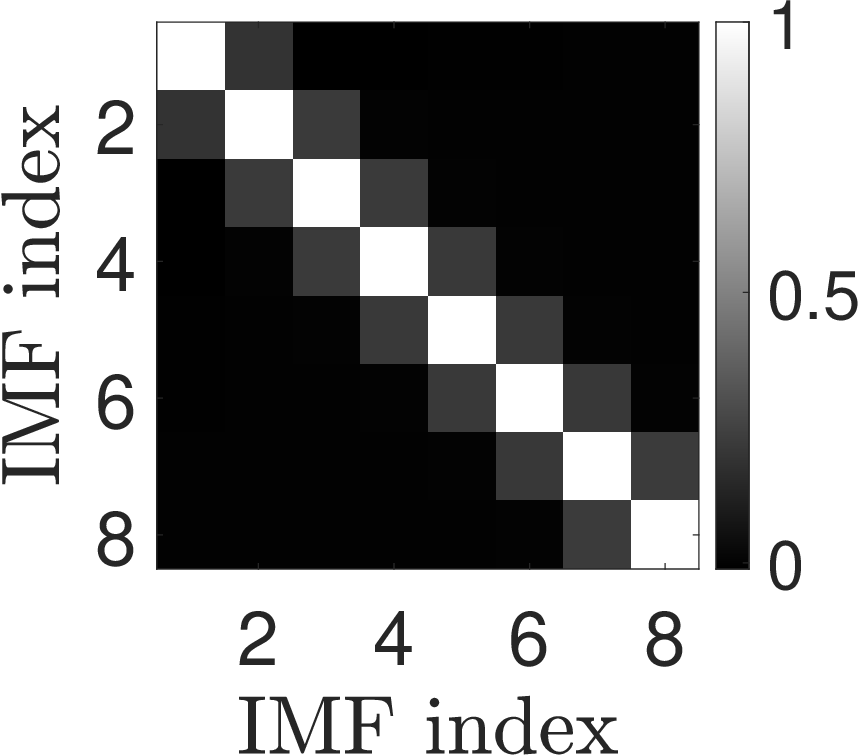} }}
    \caption{MvFIF and MEMD correlation coefficient matrices, respectively on the left and right panel}\label{fig:Ex4_corr}
\end{figure}

In Figure \ref{fig:Ex4_corr} we show the correlation coefficient matrices obtained from MvFIF, left panel, and MEMD, right, when applied to the 4--channel wGn ensemble, which we averaged over all the 100 realizations and channels. We notice the almost diagonal structure of both correlation matrix which suggests a strong quasi--orthogonality of the IMFs produced by both techniques.

\subsection{Noise Robustness}

In this section we show the robustness of MvFIF decomposition in presence of noise and compare performance with other methods proposed so far in the literature. In the following, we consider the signal to noise ratio (SNR) measured in dB
$$\textrm{SNR}=20\log(\|\textrm{signal}\|_2/\|\textrm{noise}\|_2)$$
The IF method, and its fast implementation FIF, have been already proved to be robust against noise in \cite{cicone2016adaptive,cicone2020Direct}. In order to test also MvFIF robustness, we consider the following bivariate signal which we perturb with additive Gaussian noise at different noise levels. 
$$c_1(t)=\frac{1}{2}t+\sin\left(2\pi t+\frac{\pi}{2}\right)+\cos\left(\frac{2}{10}\pi t^{1.3}\right)$$
where $c_{11}(t)$, $c_{12}(t)$, and $c_{13}(t)$ correspond to the three components, respectively.
$$c_2(t)=-\frac{1}{5}t+\sin\left(2\pi t\right)+\sin\left(6\pi \left(\frac{1}{20}t^{1.5}+t\right)\right)$$
whose components are labeled $c_{21}(t)$, $c_{22}(t)$, and $c_{23}(t)$, respectively. 
We test the performance of MvFIF at four different couples of SNR levels:  (15,2), (9,8),  (23,16), (35,22), where the first entry correspond to the first channel. For each SNR couple we produce one hundred different realizations of Gaussian noise that we add to the two channels. 

%In the following, we report statistics for the SNR couple (15,2). All other cases results are detailed in the Supplementary Material (SM).

In figure \ref{fig:Ex5_MvFIF}, top rows, we show an example of the previously defined bivariate signal perturbed by additive noise, when the SNR is (15,2) dB. In the same figure we report the MvFIF decomposition. The IMFs of the two channels, in black, are plotted against the unperturbed signal, in red.

\begin{figure}%
    \centering
    \subfloat{{\includegraphics[width=0.5\textwidth]{./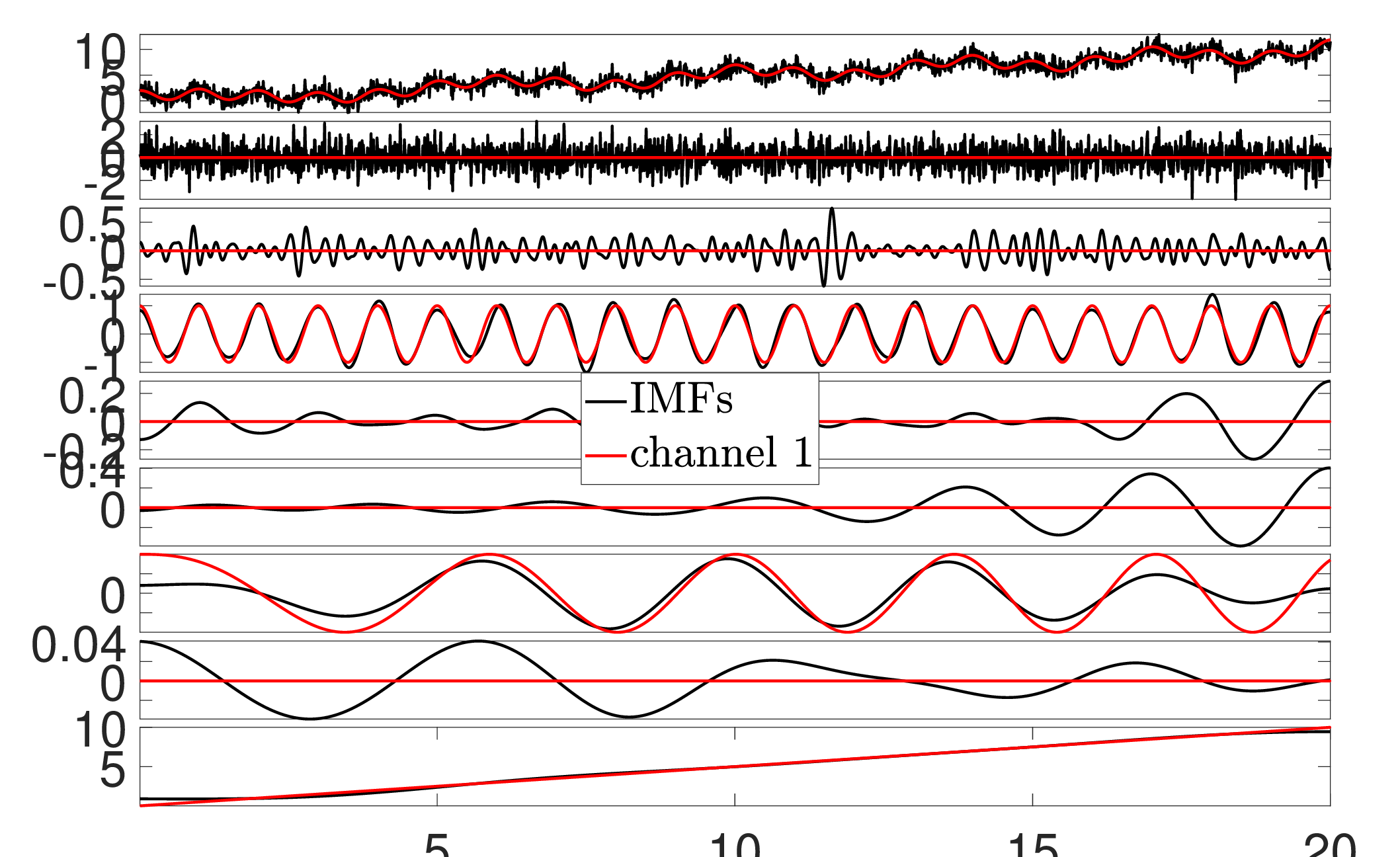} }}\\
    \subfloat{{\includegraphics[width=0.5\textwidth]{./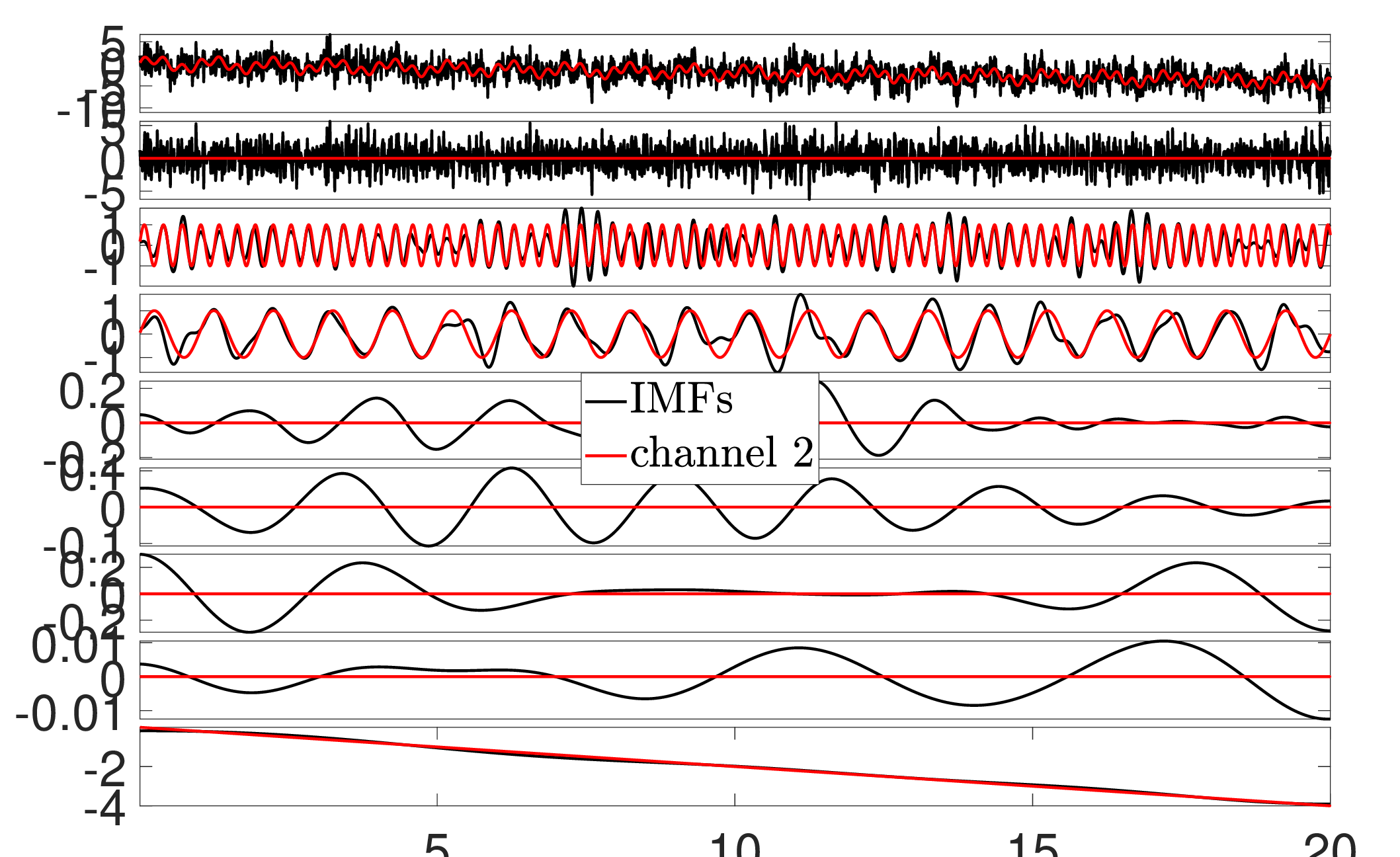} }}
    \caption{Example of robustness against noise of a MvFIF decomposition. The bivariate signal considered in this case, which is depicted in the top rows of the top and bottom panels, has a SNR (15,2). The IMFs produced by MvFIF are reported in rows from 2 to 8.}\label{fig:Ex5_MvFIF}
\end{figure}

We observe that, in the example depicted, the method is perfectly able to reconstruct the two oscillatory components of each clean channel and their trends and separate them from the noise contributions. In particular, in the top panel of Figure \ref{fig:Ex5_MvFIF}, we see that IMF 3 and 6 contain the oscillatory components of the first channel, i.e. $c_{12}$ and $c_{13}$, whereas in the bottom panel, we recognize the oscillatory components of the second channel, i.e. $c_{22}$ and $c_{23}$, as IMF 2 and 3. All these IMFs are, as expected, slightly perturbed given the SNR level of this signal. Already from this simple result it becomes clear the robustness of the method, even in presence of heavy noise.

%In Figure S1 of the SM we report, for comparison, the MEMD decomposition of the same example and we provide an analysis of the results.
%In Figure \ref{figS:Ex5_MEMD}, top rows, we show an example of the previously defined bivariate signal perturbed by additive noise, when the SNR is (15,2) dB.

\begin{figure}%
	\centering
	\subfloat{{\includegraphics[width=0.5\textwidth]{./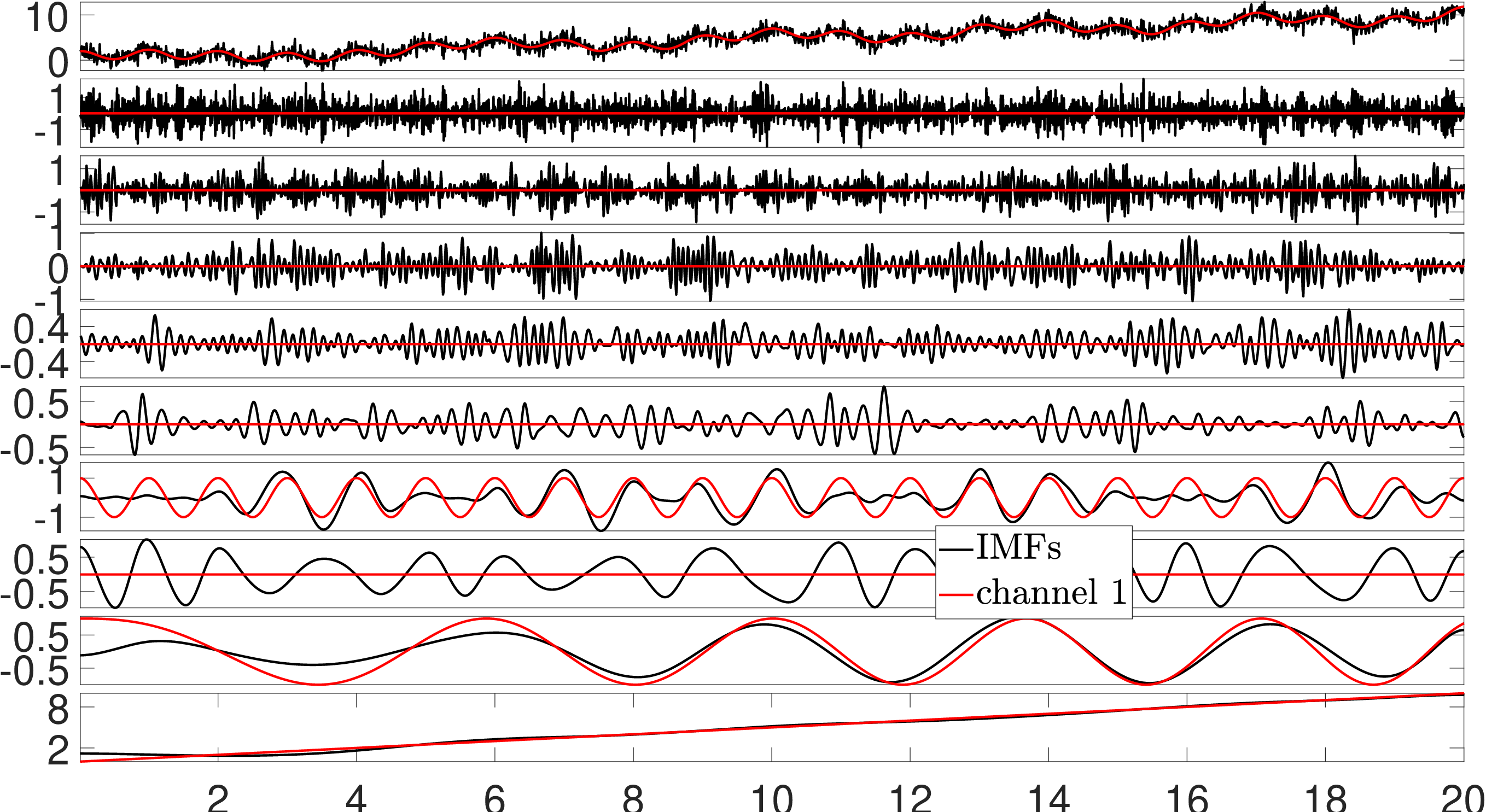} }}\\
	\subfloat{{\includegraphics[width=0.5\textwidth]{./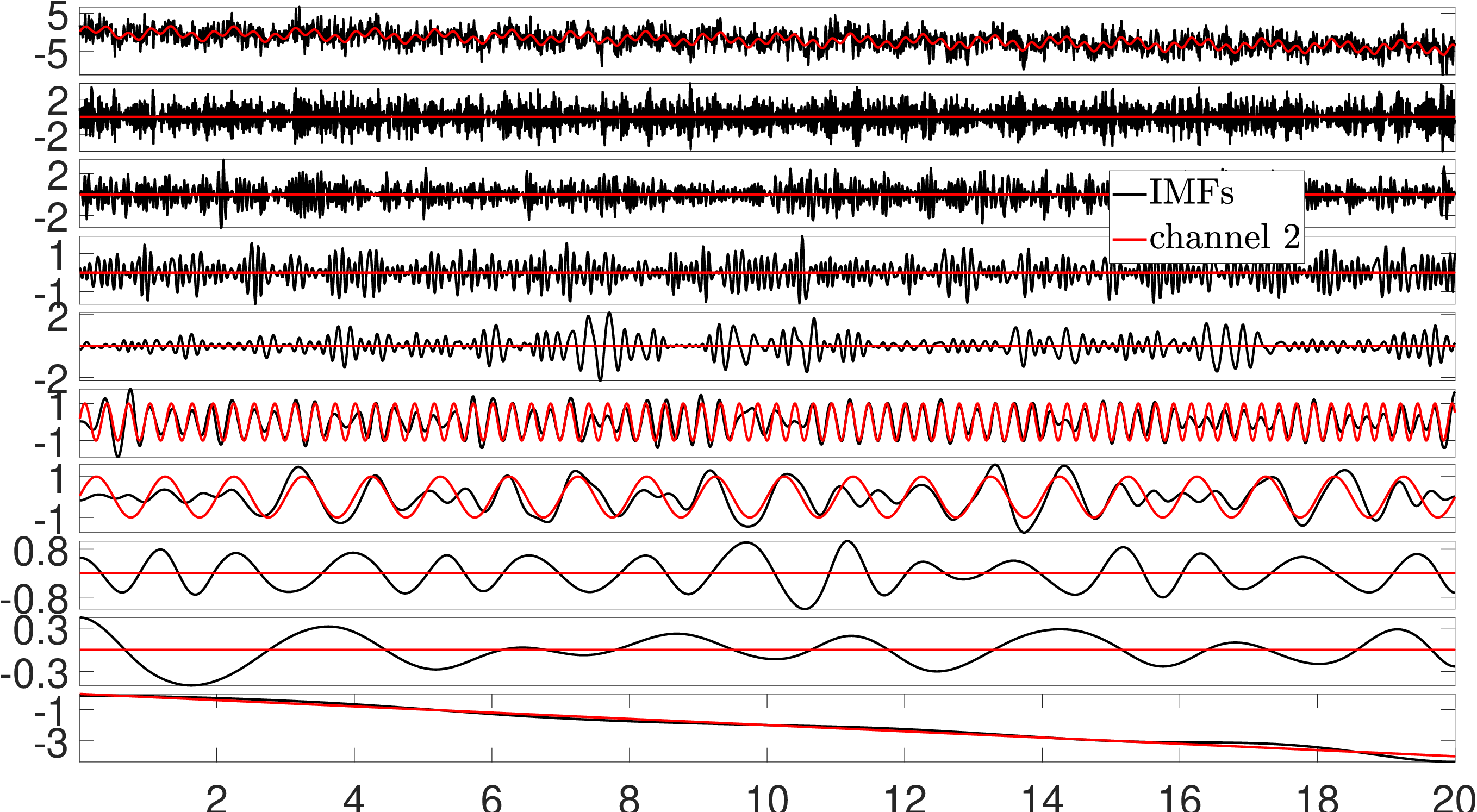} }}
	\caption{MEMD decomposition of the sample bivariate perturbed signal with SNR couple (15,2).}\label{figS:Ex5_MEMD}
\end{figure}

In Figure \ref{figS:Ex5_MEMD} we report the MEMD decomposition of this example to compare it with the results obtained by MvFIF applied on the same example. Like for the MvFIF decomposition, also MEMD method is able to separate the meaningful IMFs from the noise contributions (IMF 6 and 8 for the first channel, and IMF 5 and 6 for the second). However it is possible to observe a lower quality in the reconstruction. As known, in fact, EMD-based methods are sensitive to noise. To reduce the impact of the noise it is possible to increase the number of reprojections of the signals to a value higher than the default 64, as explained in \cite{rehman2009Multivariate}, at the price of increasing the computational cost of the method.

%To provide a quantitative indicator of the quality and performance of MvFIF compared also to other techniques, we fix, as an example, the SNR around $15$ dB for the first channel, and $2$ dB for the second one, and we consider one hundred different realizations of Gaussian noise added to the two channels. In the SM, as noted before, we report results also for the other three couples of SNR. 

To provide a quantitative indicator of the quality and performance of MvFIF compared to MEMD, mentioned earlier, FA-MVEMD\footnote{The FA-MVEMD Matlab version is available at \url{https://it.mathworks.com/matlabcentral/fileexchange/71270-fast-and-adaptive-multivariate-and-multidimensional-emd}} \cite{thirumalaisamy2018fast}, the MVMD\footnote{The MVMD Matlab version is available at \url{https://it.mathworks.com/matlabcentral/fileexchange/72814-multivariate-variational-mode-decomposition-mvmd}} \cite{ur2019multivariate}, and the MSSA \cite{golyandina2015MSSA} methods, we fix the SNR of the two channels to be, in dB, either the couple (15,2), or (9,8), or (23,16), or (35,22), and then we consider one hundred different realizations of Gaussian noise added to the two channels. For each realization of noise we study the performance of all the aforementioned techniques using the Reconstructed Quality Factor (RQF) \cite{fourer2016recursive} which is defined as follows:
\begin{equation}%\label{eq:RQF}
	RQF_{\overline{\textbf{v}}_k-\textbf{v}_k}=20\log_{10} \frac{\left\|\textbf{v}_k\right\|_2}{\left\| \overline{\textbf{v}}_k-\textbf{v}_k \right\|_2}
\end{equation}
\noindent where $\textbf{v}_k=\left[v_{k}(i)\right]_{i=1,\ldots, n}$, and $\overline{\textbf{v}}_k=\left[v_{k}(i)\right]_{i=1,\ldots, n}$ are the actual and reconstructed \emph{k}-th component of the signal. We point out that some authors refer to this factor as Quality Reconstruction Factor (QRF), see, for instance, \cite{jain2020}.

To summarize the results, we report in a table the mean value and, in between parentheses, the standard deviation of the RQF per channel and per couple of SNR.

%%%%%%%%%%%%%%%%%%%%%%%%%%%%%%%%%%%%%%
In Tables \ref{tabS:Noise_1} we report the results for the SNR couple (15,2) dB,

\begin{center}
	\begin{tabular}{c|c|c|c|c|c|c}
		%\hline
		% after \\: \hline or \cline{col1-col2} \cline{col3-col4} ...
		
		& \multicolumn{3}{c|}{Channel 1 } & \multicolumn{3}{c}{Channel 2 }\\
		\cline{2-7}
		%\cline{2-7}
		
		& $c_{13}$ &   $c_{12}$  &   $c_{11}$ &  $c_{23}$ &   $c_{22}$  &   $c_{21}$\\
		\hline
		%\cline{2-7}
		MvFIF     &  28.09 (2.59) &  \textbf{13.45 (1.47)}  &  6.95 (1.72)&   \textbf{26.13 (2.63)}  &  \textbf{7.18 (1.67)} &   \textbf{4.10 (0.87)}\\ 
		FA-MVEMD  &   \textbf{29.20 (2.25)} &  10.41 (1.75)  &  \textbf{8.04 (1.93)} &     24.96 (2.66)  &  4.48 (2.14) &   3.50 (1.03)\\ 
		MVMD      &  16.28 (0.18) &   0.36 (0.09)  & -0.43 (0.05) &   11.55 (0.46)  & -3.44 (0.27) &   3.62 (0.51) \\ 
		MEMD      &  8.01 (1.38) &   4.03 (2.41)  &  5.51 (2.78)&  8.07 (1.56)  &  2.59 (1.74) &   3.26 (0.78)\\ 
		MSSA      &   -0.02 (0.00) &  -7.16 (8.57)  & -7.16 (8.59)&   -0.00 (0.00)  & -5.87 (3.77) &  -5.98 (3.74) \\ 
	\end{tabular}
	\captionof{table}{RQF mean value and standard deviation, in between parentheses, of the two channels when the SNR is of $15$ dB for the first channel, and $2$ dB for the second one.}\label{tabS:Noise_1}	
\end{center}

Average computational times and the corresponding standard deviations for the different techniques compared in this example are listed in Table  \ref{tabS:Noise_time_1}.

\begin{center}
	\begin{tabular}{c||c|c|c|c|c}
		& MvFIF    &   FA-MVEMD & MVMD   & MEMD     & MSSA \\ 
		\hline
		\hline
		Time (s) & \textbf{ 0.25 (0.07)} &   0.59 (0.15)  & 5.11  (2.00)   & 40.60 (4.50)  &0.49 (0.04)  \\
		\end{tabular}
		\captionof{table}{Mean computational times, in seconds, and standard deviation, in between parentheses, for different methods when the bivariate signal has SNR around $15$ dB for the first channel, and $2$ dB for the second one.}\label{tabS:Noise_time_1}
	\end{center}

In Table \ref{tabS:Noise_2} we report the results for the SNR couple (9,8) dB. The corresponding computational times are listed in Table \ref{tabS:Noise_time_2}.

\begin{center}
	\begin{tabular}{c|c|c|c|c|c|c}
		%\hline
		% after \\: \hline or \cline{col1-col2} \cline{col3-col4} ...
		
		& \multicolumn{3}{c|}{Channel 1 } & \multicolumn{3}{c}{Channel 2 }\\
		\cline{2-7}
		%\cline{2-7}
		
		& $c_{13}$ &   $c_{12}$  &   $c_{11}$ &  $c_{23}$ &   $c_{22}$  &   $c_{21}$\\
		\hline
		%\cline{2-7}
		MvFIF     &   27.33 (2.29)  &    7.07 (2.25)  &    5.58 (1.85)&   29.78 (2.25)  &   11.43 (1.15)  &    9.58 (0.74)\\ 
		FA-MVEMD  &   28.42 (2.39)  &    7.16 (1.76)  &    5.69 (1.94)&   28.89 (2.81)  &   10.26 (1.36)  &    5.96 (1.75)\\ 
		MVMD      &   15.90 (0.36)  &   -0.91 (0.21)  &   -1.51 (0.17)&   12.43 (0.27)  &   -2.75 (0.16)  &    9.16 (0.46)\\ 
		MEMD      &    7.94 (0.98)  &    2.83 (1.61)  &    5.30 (2.38)&    8.03 (1.05)  &    4.02 (2.12)  &    5.74 (1.24)\\ 
		MSSA      &   -0.00 (0.00)  &   -1.64 (4.96)  &   -1.64 (4.96)&   -0.10 (0.02)  &   -2.03 (2.55)  &   -2.27 (2.49)\\ 
	\end{tabular}
	\captionof{table}{RQF mean value and standard deviation, in between parentheses, of the two channels when the SNR is of $9$ dB for the first channel, and $8$ dB for the second one.}\label{tabS:Noise_2}
\end{center}

\begin{center}
	\begin{tabular}{c||c|c|c|c|c}
		& MvFIF    &   FA-MVEMD & MVMD   & MEMD     & MSSA \\ 
		\hline
		\hline
		Time (s) & 0.08 (0.06)  &  0.37 (0.12)  & 3.52 (1.41)   & 39.75 (5.02)  & 0.33 (0.02)  \\
	\end{tabular}
	\captionof{table}{Average computational time and, in between parentheses, the standard deviation, in seconds, for different methods when the SNR couple is (9,8).}\label{tabS:Noise_time_2}
\end{center}

Table \ref{tabS:Noise_3} contains the results for the SNR couple (23,16) dB. The corresponding computational times are listed in Table \ref{tabS:Noise_time_3}.

\begin{center}
	\begin{tabular}{c|c|c|c|c|c|c}
		%\hline
		% after \\: \hline or \cline{col1-col2} \cline{col3-col4} ...
		
		& \multicolumn{3}{c|}{Channel 1 } & \multicolumn{3}{c}{Channel 2 }\\
		\cline{2-7}
		%\cline{2-7}
		
		& $c_{13}$ &   $c_{12}$  &   $c_{11}$ &  $c_{23}$ &   $c_{22}$  &   $c_{21}$\\
		\hline
		%\cline{2-7}
		MvFIF     &   27.47 (2.02)  &   19.44 (1.44)  &    7.22 (1.42)&   32.64 (1.24)  &   15.21 (1.73)  &   15.49 (0.74)\\ 
		FA-MVEMD  &   30.41 (2.80)  &   15.73 (1.68)  &   11.41 (1.89)&   28.00 (2.18)  &    7.17 (4.63)  &   10.32 (1.41)\\ 
		MVMD      &   16.45 (0.08)  &    0.90 (0.02)  &   -0.07 (0.01)&   12.83 (0.10)  &   -2.49 (0.07)  &   14.48 (0.29)\\ 
		MEMD      &    7.95 (0.63)  &   14.16 (3.35)  &    5.52 (2.49)&    7.99 (0.72)  &   12.29 (2.88)  &    6.39 (2.61)\\ 
		MSSA      &   -0.00 (0.00)  &  -10.57 (9.10)  &  -10.59 (9.11)&   -0.01 (0.01)  &   -6.10 (5.35)  &   -4.74 (5.20)\\ 
	\end{tabular}
	\captionof{table}{RQF mean value and standard deviation, in between parentheses, of the two channels when the SNR is of $23$ dB for the first channel, and $16$ dB for the second one.}\label{tabS:Noise_3}
\end{center}

\begin{center}
	\begin{tabular}{c||c|c|c|c|c}
		& MvFIF    &   FA-MVEMD & MVMD   & MEMD     & MSSA \\ 
		\hline
		\hline
		Time (s) &    0.09 (0.03) &0.36 (0.14)    &     3.71 (1.56) &   38.01 (4.71) &   0.37 (0.06)\\
	\end{tabular}
	\captionof{table}{Average computational time and, in between parentheses, the standard deviation, in seconds, for different methods when the SNR couple is (23,16).}\label{tabS:Noise_time_3}
\end{center}

The last case results, SNR couple (35,22) dB, are reported in Table \ref{tabS:Noise_4}, whereas, the corresponding computational times are listed in Table \ref{tabS:Noise_time_4}.

\begin{center}
	\begin{tabular}{c|c|c|c|c|c|c}
		%\hline
		% after \\: \hline or \cline{col1-col2} \cline{col3-col4} ...
		
		& \multicolumn{3}{c|}{Channel 1 } & \multicolumn{3}{c}{Channel 2 }\\
		\cline{2-7}
		& $c_{13}$ &   $c_{12}$  &   $c_{11}$ &  $c_{23}$ &   $c_{22}$  &   $c_{21}$\\
		\hline
		%\cline{2-7}
		MvFIF     &   26.00 (1.45)  &   23.49 (3.45)  &    6.79 (0.66)&   33.13 (0.73)  &   16.74 (1.87)  &   19.05 (0.46)\\ 
		FA-MVEMD  &   31.24 (2.33)  &   20.65 (2.72)  &   12.06 (2.17)&   28.88 (3.05)  &   14.96 (3.33)  &   13.62 (1.66)\\ 
		MVMD      &   19.34 (0.02)  &   17.50 (0.06)  &   -2.55 (0.01)&   30.13 (0.42)  &   18.00 (0.21)  &   18.60 (0.26)\\ 
		MEMD      &    8.04 (0.42)  &   17.44 (3.89)  &    6.31 (2.10)&    8.12 (0.49)  &   14.42 (2.93)  &   15.58 (0.81)\\ 
		MSSA      &   -0.00 (0.00)  &  -17.54 (3.61)  &  -17.56 (3.62)&   -0.00 (0.00)  &  -10.30 (1.88)  &   -8.04 (4.62)\\ 
	\end{tabular}
	\captionof{table}{RQF mean value and standard deviation, in between parentheses, of the two channels when the SNR is of $35$ dB for the first channel, and $22$ dB for the second one.}\label{tabS:Noise_4}
\end{center}

\begin{center}
	\begin{tabular}{c||c|c|c|c|c}
		& MvFIF    &   FA-MVEMD & MVMD   & MEMD     & MSSA \\ 
		\hline
		\hline
		Time & 0.13 (0.03)  &   0.41 (0.12) &   3.79 (1.89) &  42.48 (7.97) &   0.49 (0.04)\\
	\end{tabular}
	\captionof{table}{Average computational time and, in between parentheses, the standard deviation, in seconds, for different methods when the SNR couple is (35,22).}\label{tabS:Noise_time_4}
\end{center}

From the comparisons reported above it is possible to observe that MvFIF performs better than other techniques and its computational complexity is the smallest among all. As we will see in the following examples, the computational efficiency of the proposed method becomes even more evident as the number of channels in the signal increases.

%%%%%%%%%%%%%%%%%%%%%%%%%%%%%%%%%%%%%%%%%
\subsection{Performance evaluation for an increasing number of channels}

In this section we test the performance of MvFIF as the number of channels in a signal varies from 2 to 64. We consider an ensemble of 200 realizations of wGn processes containing 1000 sample points each, and a number of channels equal to 2, 16, 64, respectively.
\begin{center}
\begin{tabular}{c||c|c|c}
  %\hline
  % after \\: \hline or \cline{col1-col2} \cline{col3-col4} ...
  $\#$ of Channels  & 2 & 16 & 64 \\
  \hline
  \hline
  Computational Time (s) & $2.85$ & $7.91$ & $22.49$\\
\end{tabular}
\captionof{table}{Summary of the computational times versus the number of channels}\label{tab:Ex6}
\end{center}

We decompose these signals into 8 IMFs plus a trend. The computational time to produce each decomposition is reported in Table \ref{tab:Ex6}.
\begin{figure}%
	\centering
	\subfloat{{\includegraphics[width=0.32\textwidth]{./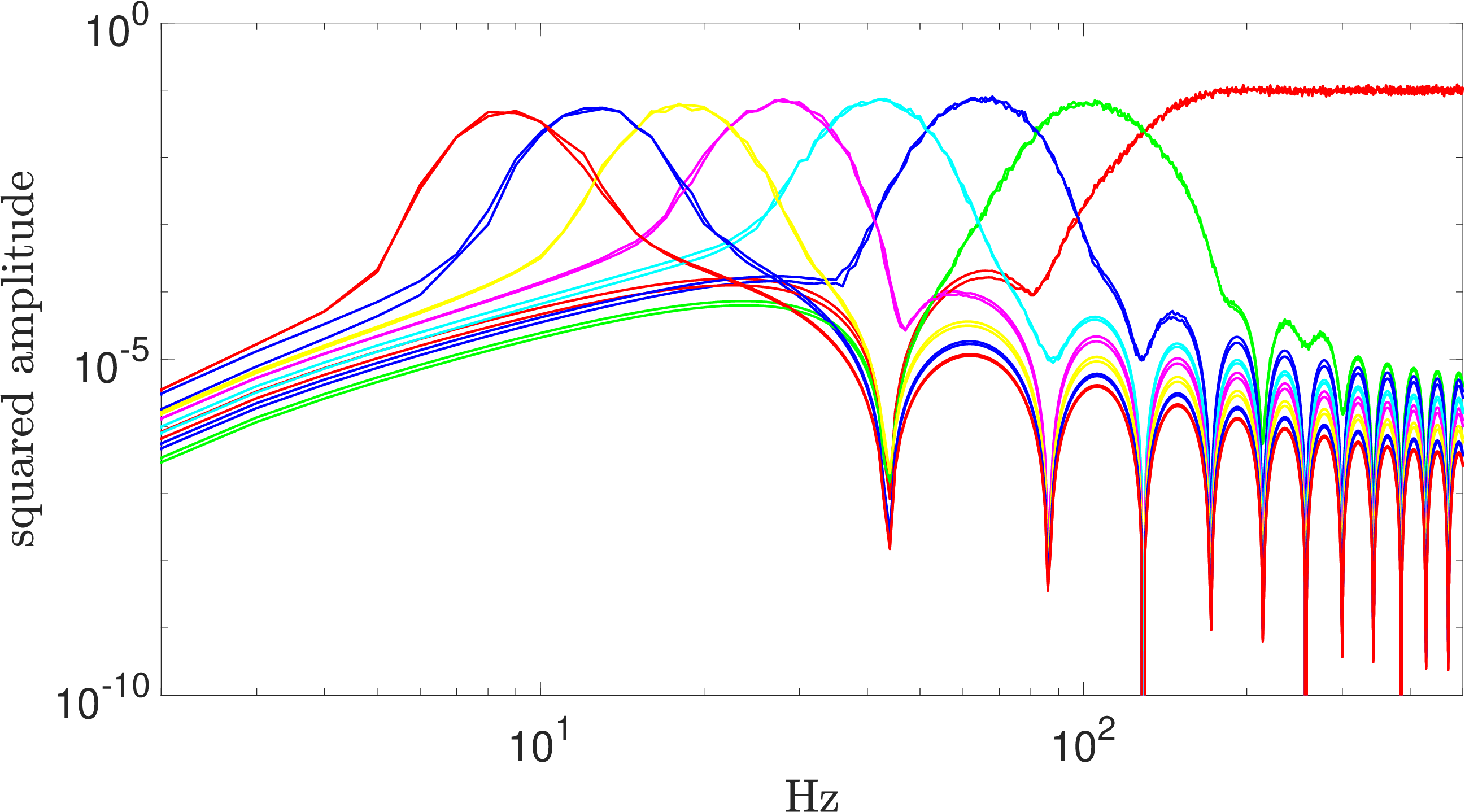} }}~\subfloat{{\includegraphics[width=0.32\textwidth]{./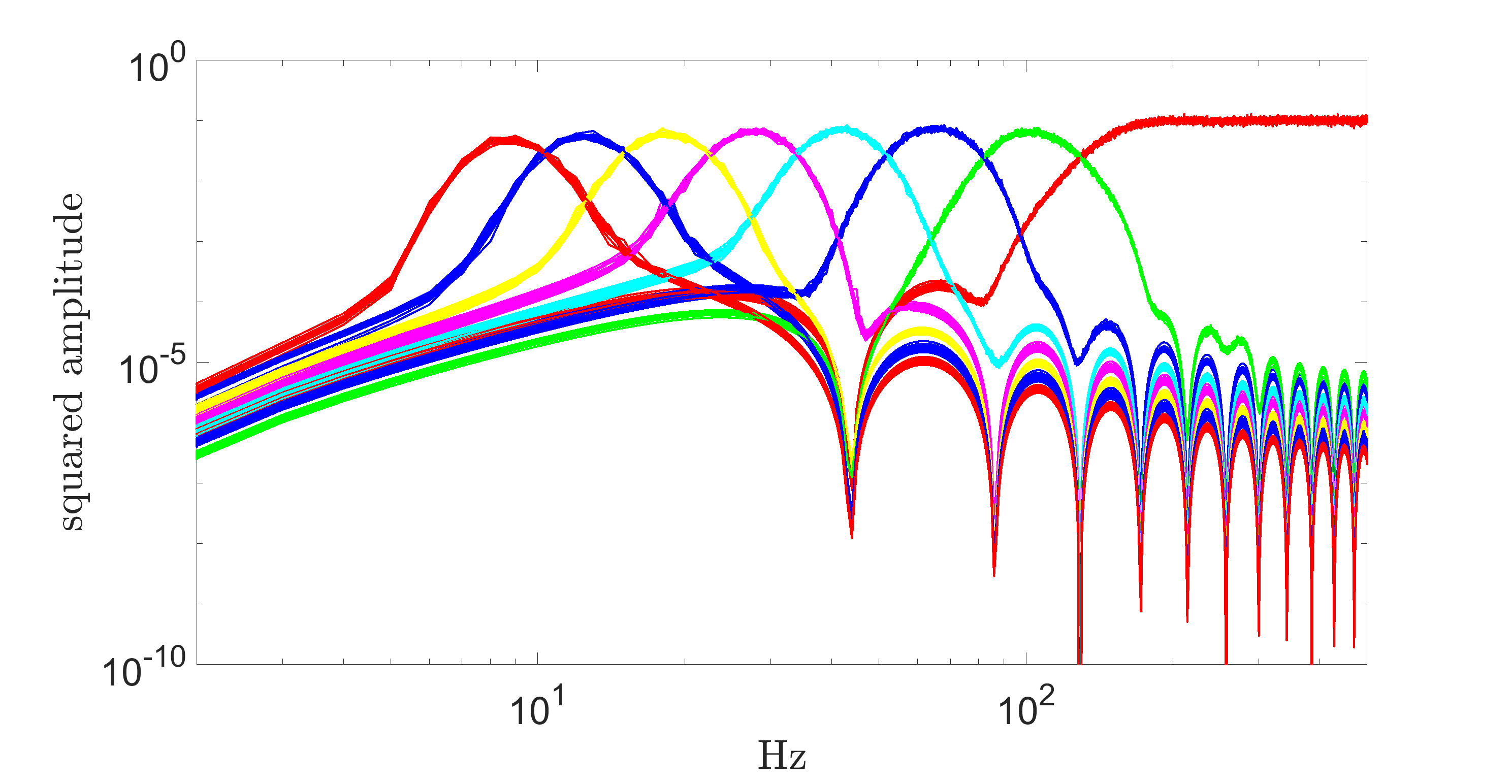} }}~\subfloat{{\includegraphics[width=0.32\textwidth]{./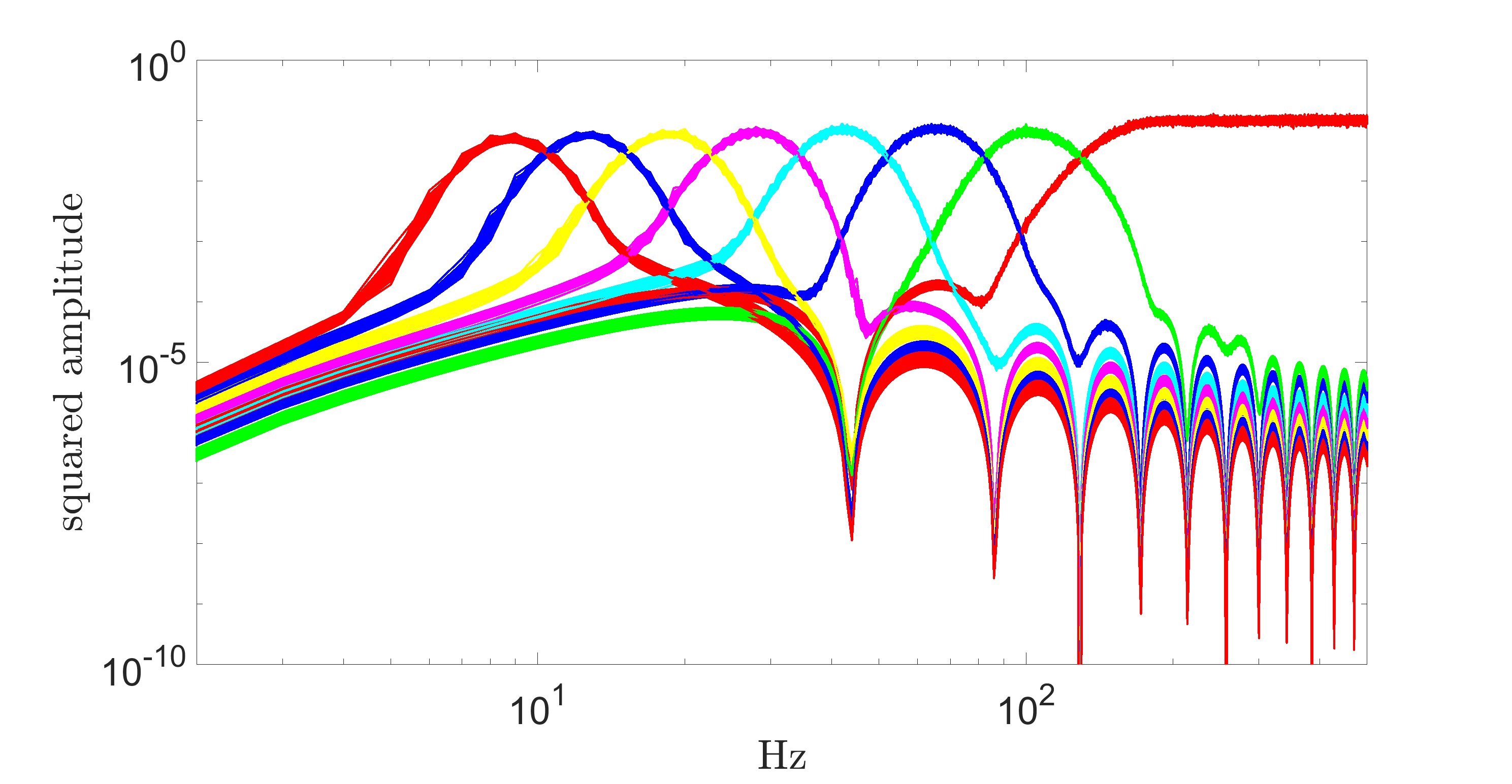} }}\\
	\subfloat{{\includegraphics[width=0.32\textwidth]{./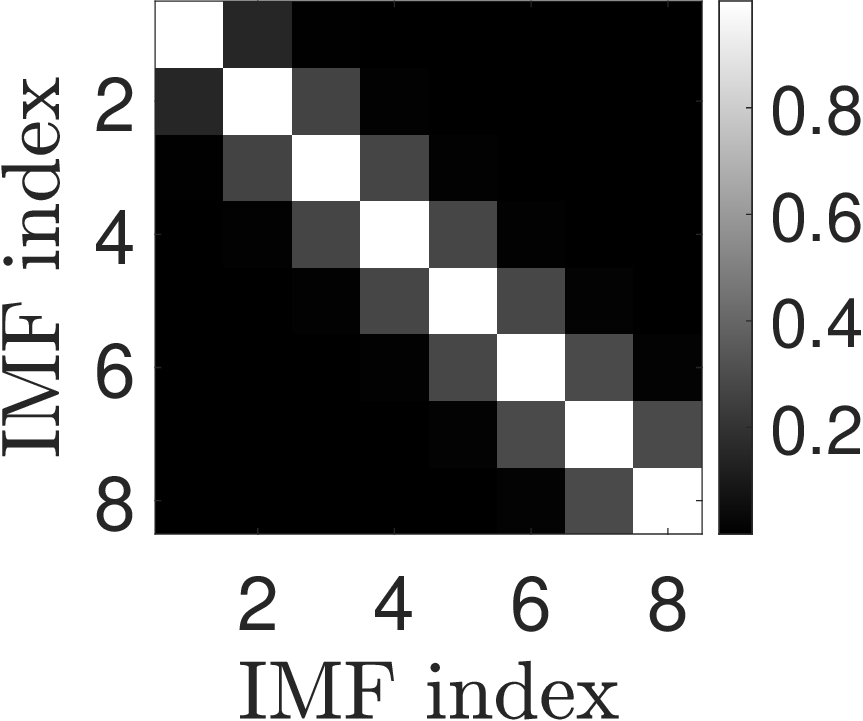} }}~\subfloat{{\includegraphics[width=0.32\textwidth]{./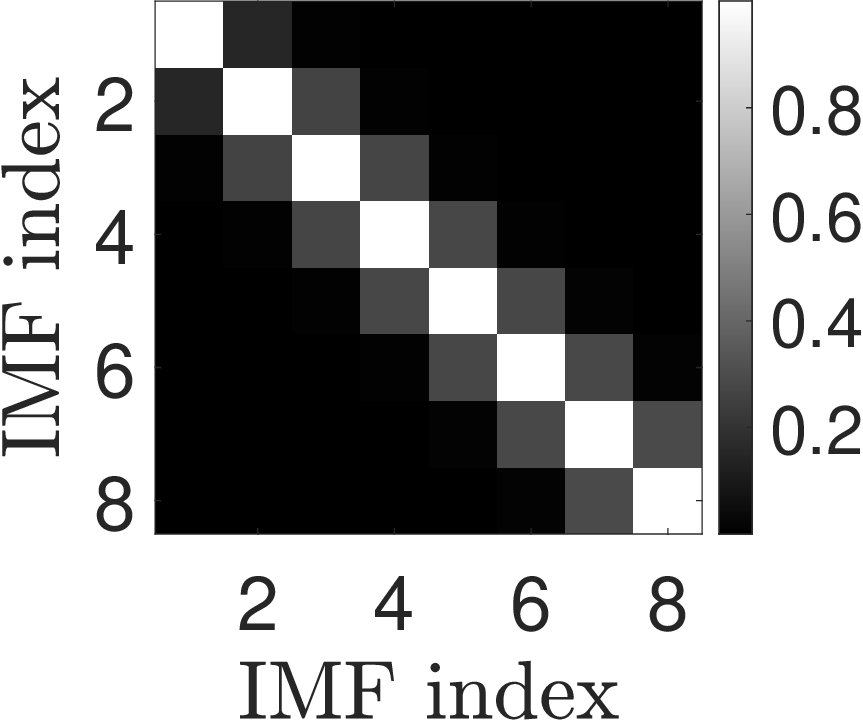} }}~\subfloat{{\includegraphics[width=0.32\textwidth]{./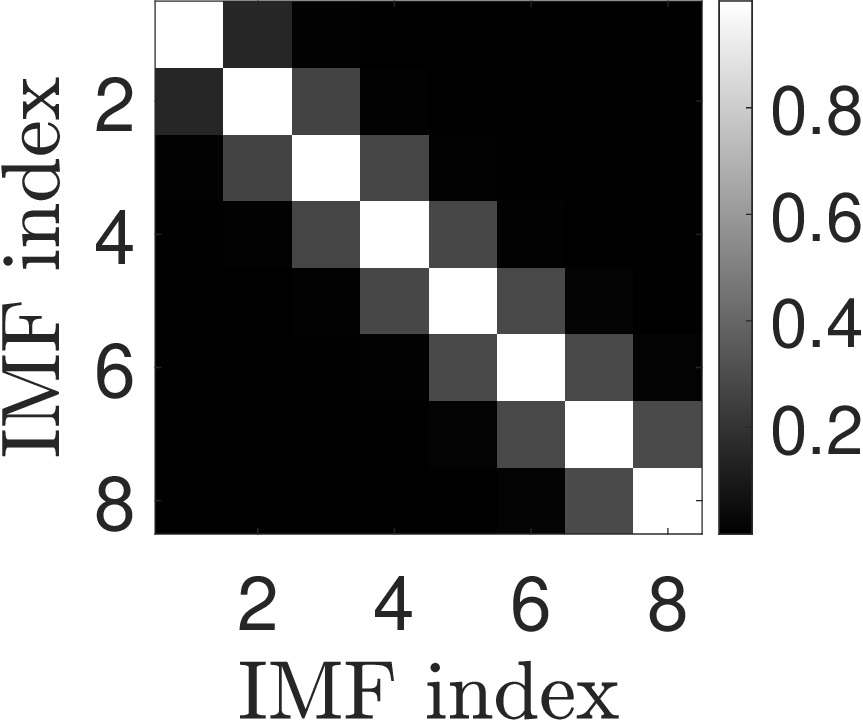} }}\\
	\caption{MvFIF decomposition filterbank plots, top row, and the correlation coefficient matrices, bottom row, of the perturbed signal for a number of channels equal to 2, 16 and 64, which are depicted in left, central and right panels respectively}\label{figS:Ex6_MvFIF}
\end{figure}

In Figure \ref{figS:Ex6_MvFIF} we plot the resulting filterbank plots and the correlation coefficient matrices. From the top row plots we can see that the filterbank and frequency--alignment properties of the MvFIF decompositions
are preserved as the number of channels are increased. Similarly the quasi--orthogonality property of the MvFIF IMFs remains unaltered even when the number of input channels increases, as shown by the diagonal structure of all the correlation plots plotted in the second row of Figure \ref{figS:Ex6_MvFIF}.

\section{Real Life Examples}\label{sec:Examples}

To show the abilities of the MvFIF algorithm in a real life setting we apply it to an EEG recording and a geophysical data set.

\subsection{EEG recording}

In this example we consider an electroencephalogram (EEG) signal of an alcoholic subject. This data arises from a large study to examine EEG correlates of genetic predisposition to alcoholism\footnote{Data are made publicly available at \url{http://kdd.ics.uci.edu/databases/eeg/eeg.html}}. It contains measurements from 64 electrodes placed on the scalp sampled at 256 Hz (3.9-msec epoch) for 1 second.
There were 122 subjects and each subject completed 120 trials where different stimuli were shown. The electrode positions were located at standard sites (Standard Electrode Position Nomenclature, American Electroencephalographic Association 1990). Further details in the data collection process can be found in \cite{zhang1995event}.

\begin{figure}%
	\centering
	\subfloat{{\includegraphics[width=0.495\textwidth]{./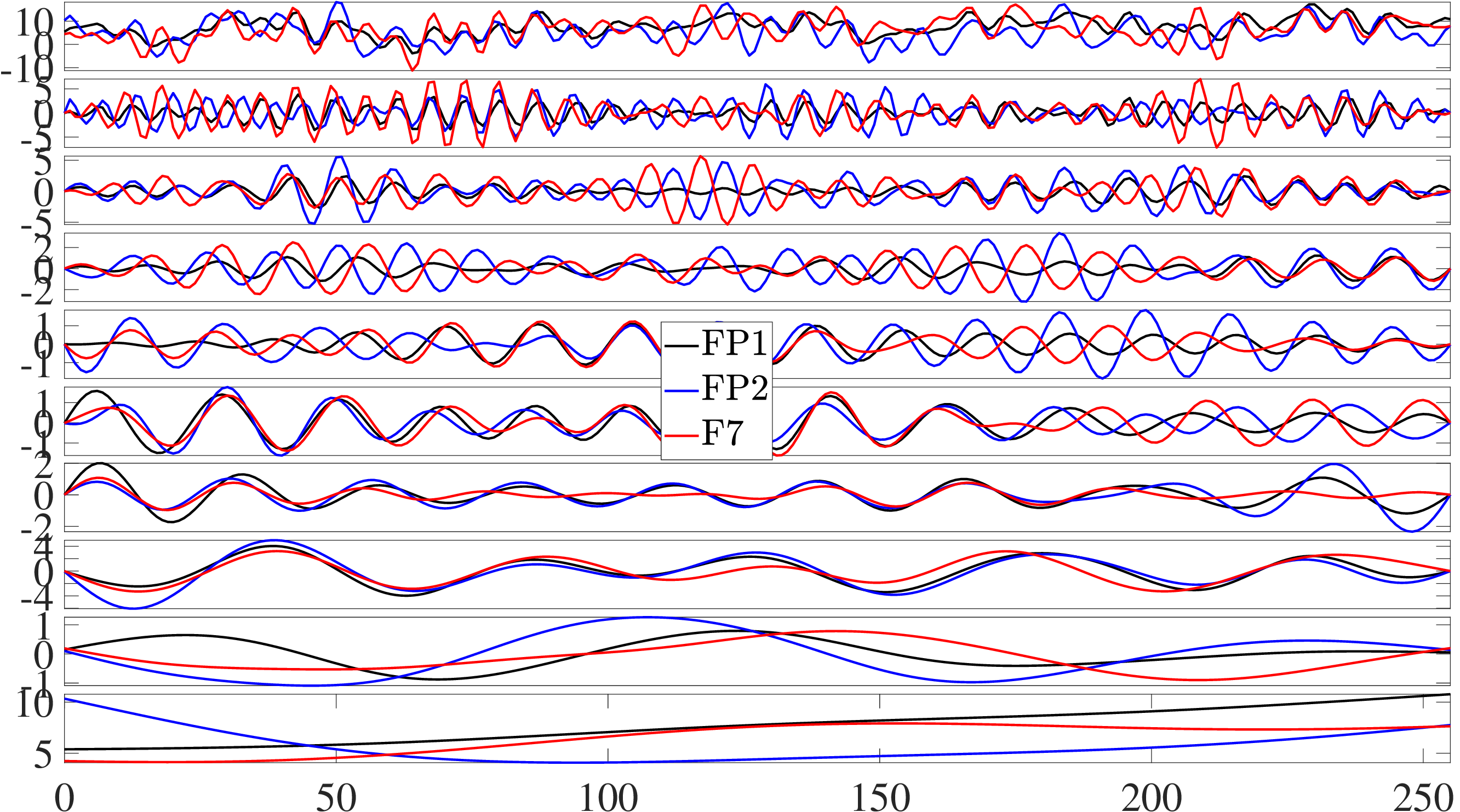} }}~\subfloat{{\includegraphics[width=0.495\textwidth]{./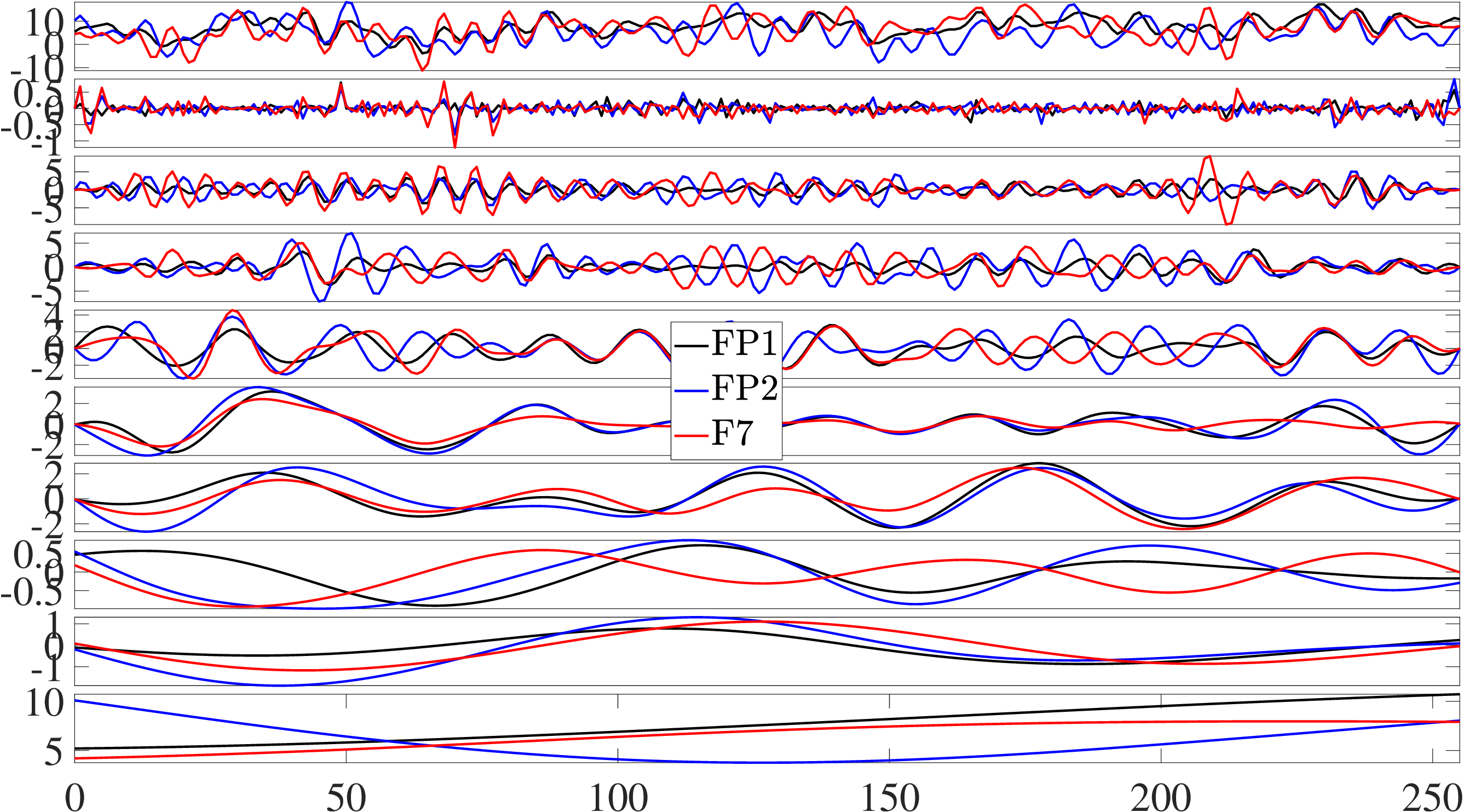} }}\\
	\subfloat{{\includegraphics[width=0.495\textwidth]{./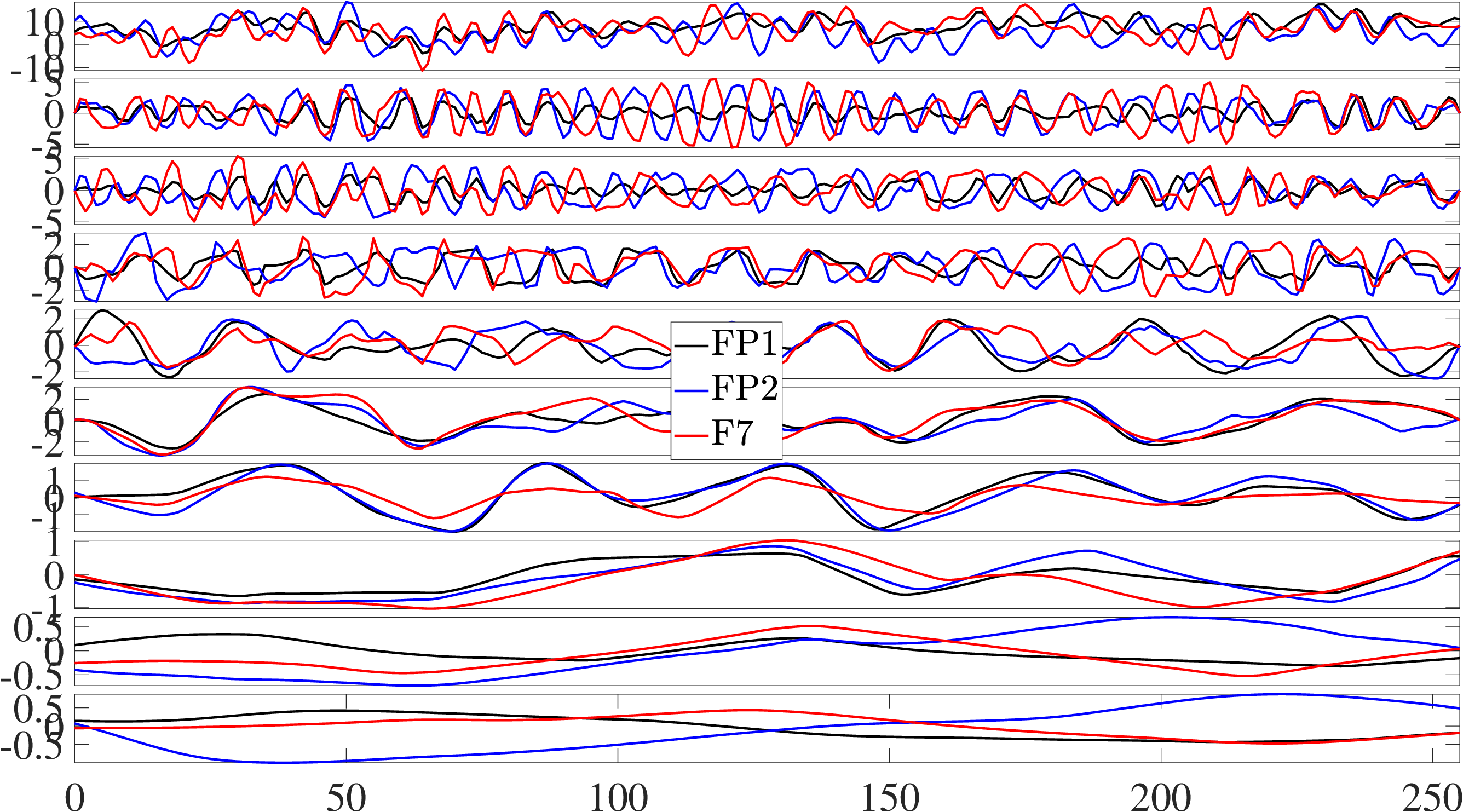} }}~\subfloat{{\includegraphics[width=0.495\textwidth]{./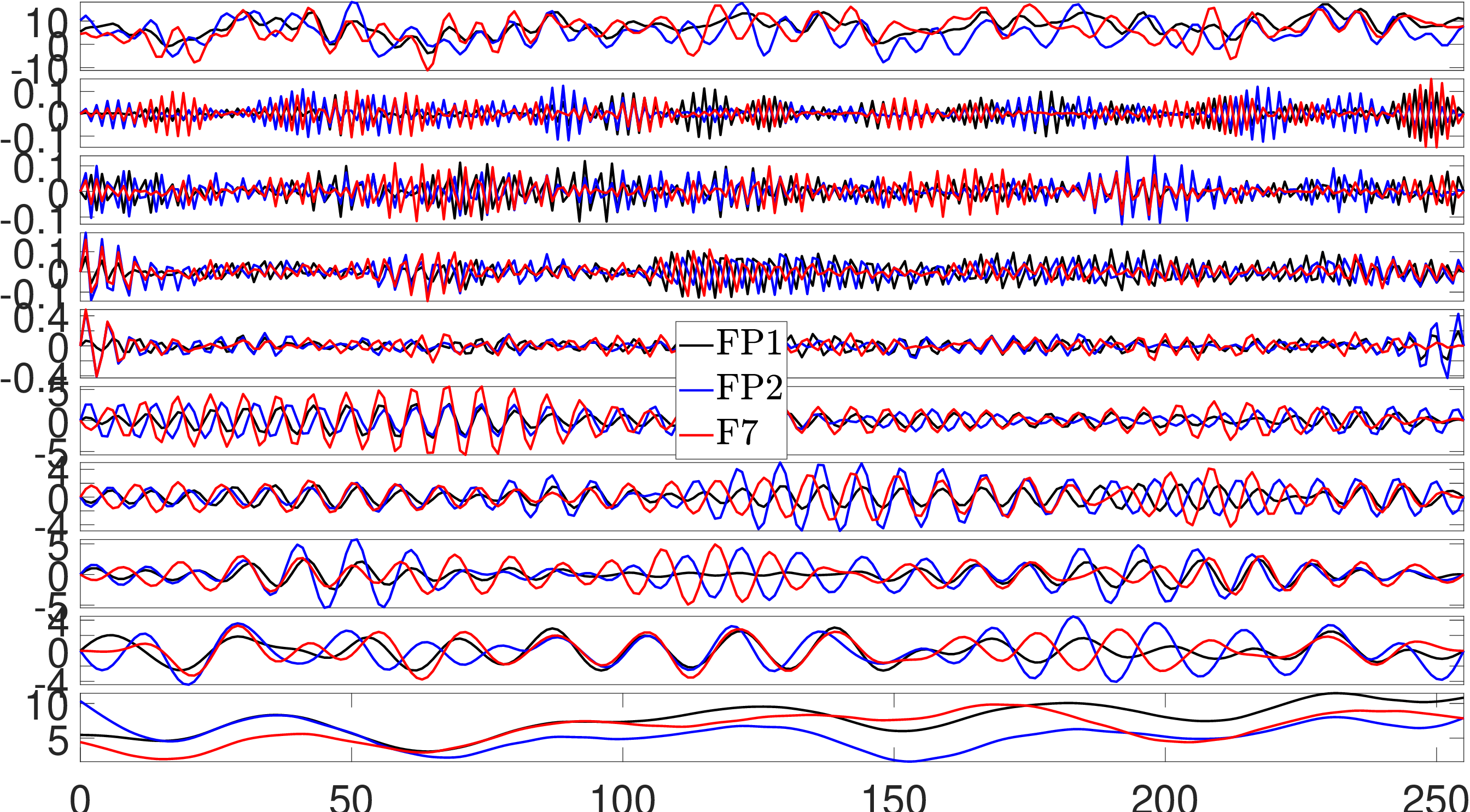} }}
	\caption{Decomposition of the first three channels of the EEG signal produced by, from left to right and from top to bottom, MvFIF, MEMD, FA-MVEMD, MVMD methods}\label{fig:Ex7_IMFs}
\end{figure}

\begin{center}
\begin{tabular}{c||c|c}
  %\hline
  % after \\: \hline or \cline{col1-col2} \cline{col3-col4} ...
  $\#$ of Channels  & 32 &  64 \\
  \hline
  \hline
  MEMD & 20.84 s & NA \\
  FA-MVEMD & 28.66 s & 64.24 s\\
  MVMD & 13.46 s & 20.51 s \\
  MvFIF & \textbf{0.06} s & \textbf{0.38} s\\
\end{tabular}
\captionof{table}{Computational times, in seconds, versus the number of channels for different methods when the signal is decomposed in 8 IMFs plus a trend. MEMD method in its current implementation supports up to 32 channels.}\label{tab:Ex7}
\end{center}

In this work we consider one patient trial, the first 3 channels out of 64 are plotted in the top row of each panel of Figure \ref{fig:Ex7_IMFs}, and we decompose the EEG multivariate signal using MvFIF, MEMD, FA-MVEMD, and MVMD methods. A comparison of their computational time is reported in Table \ref{tab:Ex7}. From this table, it is evident that the proposed technique is extremely faster than all the other methods. From a decomposition prospective, results are shown in Figure \ref{fig:Ex7_IMFs}, where the MvFIF IMFs are depicted in the top left panel. Clearly the proposed method allows to produce comparable results with the standard MEMD, top right panel in Figure\ref{fig:Ex7_IMFs}. Whereas the FA-MVEMD algorithm, bottom panel of Figure \ref{fig:Ex7_IMFs}, appears to have some sort of instability, which is most evident in the trend behavior. Finally the MVMD technique, plotted in the bottom right panel of Figure\ref{fig:Ex7_IMFs}, does not produce the expected trend, even if we tune its parameters.

\subsection{Real Life Example: Geophysical application}\label{secS:Geo}

We consider, as a final example, the decomposition of the Earth magnetic field measurements made by one of the three satellites of the European Space Agency Swarm mission\footnote{\url{http://earth.esa.int/swarm}} from April 21 to 22, 2004. In Figure \ref{figS:Ex8_IMFs} top rows of each panel, we plot the three channels of the magnetic field, namely H, D and Z components. Each channel contains 4320 sample points.

\begin{center}
	\begin{tabular}{c||c|c|c|c}
		% after \\: \hline or \cline{col1-col2} \cline{col3-col4} ...
		%\hline
		& MEMD & FA-MVEMD & MVMD & MvFIF  \\
		\hline
		\hline
		Time & 45.49 s & 89.08 s & 12.56 s & \textbf{0.15} s
	\end{tabular}
	\captionof{table}{Computational times, in seconds, for different methods when the signal is decomposed in 7 IMFs plus a trend}\label{tabS:Ex_8}
\end{center}

We decompose this trivariate signal using MvFIF, MEMD, FA-MVEMD, and MVMD methods. A comparison of their computational time is reported in Table \ref{tabS:Ex_8}. Also in this case the proposed technique proves to be at least two orders of magnitudes faster than any other method. The decompositions are plotted in Figure \ref{figS:Ex8_IMFs}, where the MvFIF IMFs are depicted in the top left panel. MvFIF produces comparable results both with the standard MEMD, top right panel, and the MVMD technique, plotted in the bottom right panel of Figure \ref{figS:Ex8_IMFs}. Also in this example the FA-MVEMD algorithm, bottom left panel of Figure \ref{figS:Ex8_IMFs}, appears to have some instability issues, see the trend component, and the quality of the reconstructed IMFs appears to be pretty low in general, see for instance IMF 2 and 3.

\begin{figure}%
	\centering
	\subfloat{{\includegraphics[width=0.495\textwidth]{./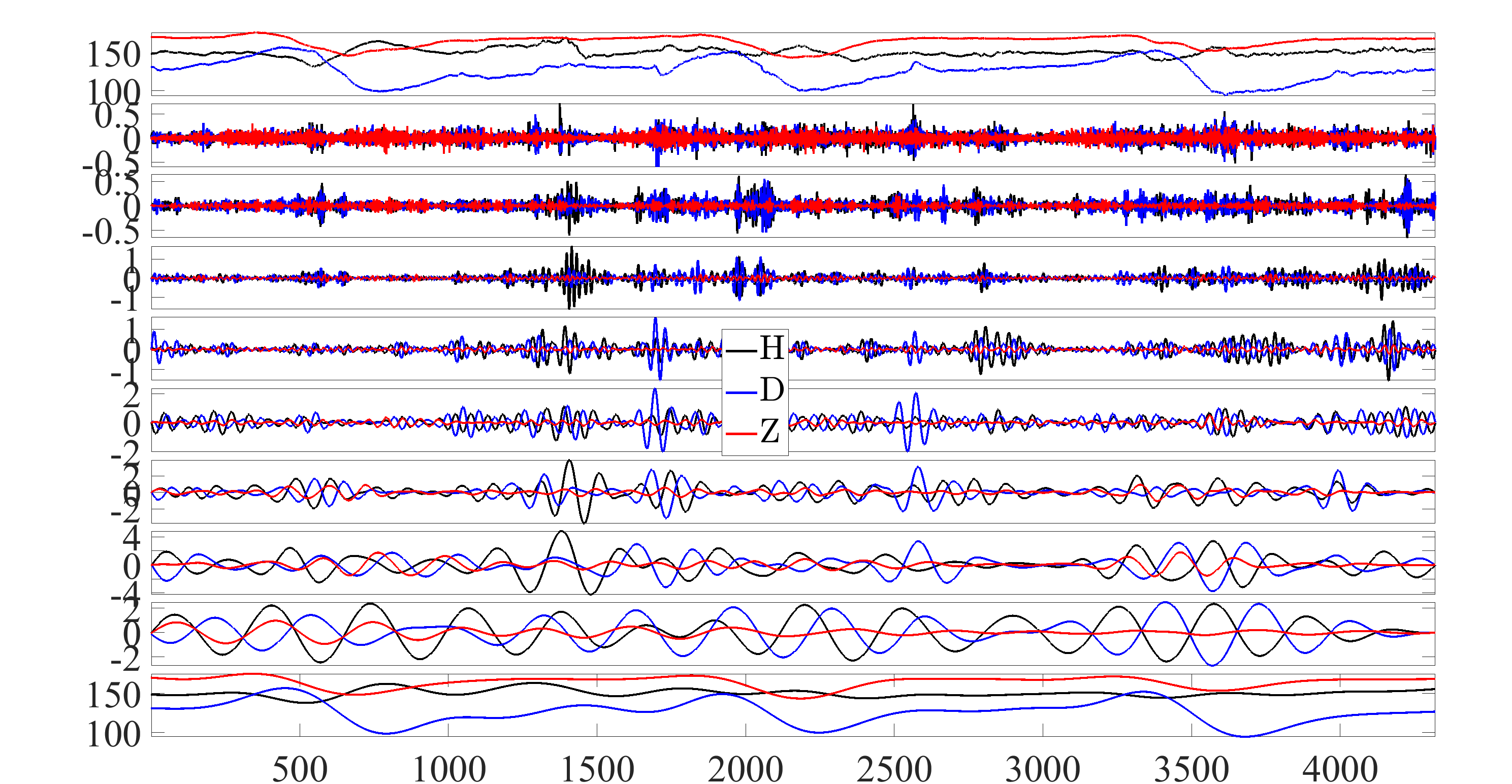} }}~\subfloat{{\includegraphics[width=0.495\textwidth]{./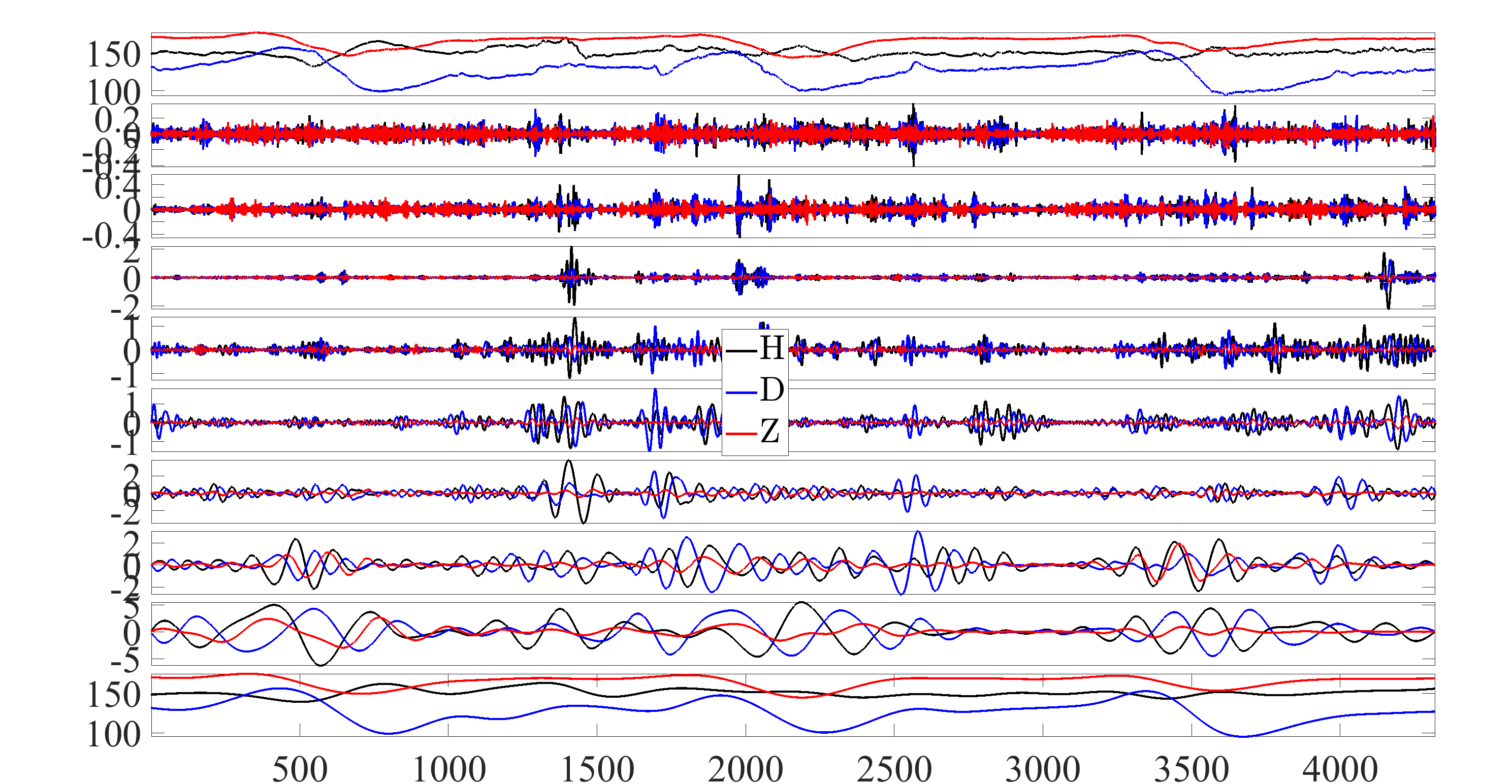} }}\\
	\subfloat{{\includegraphics[width=0.495\textwidth]{./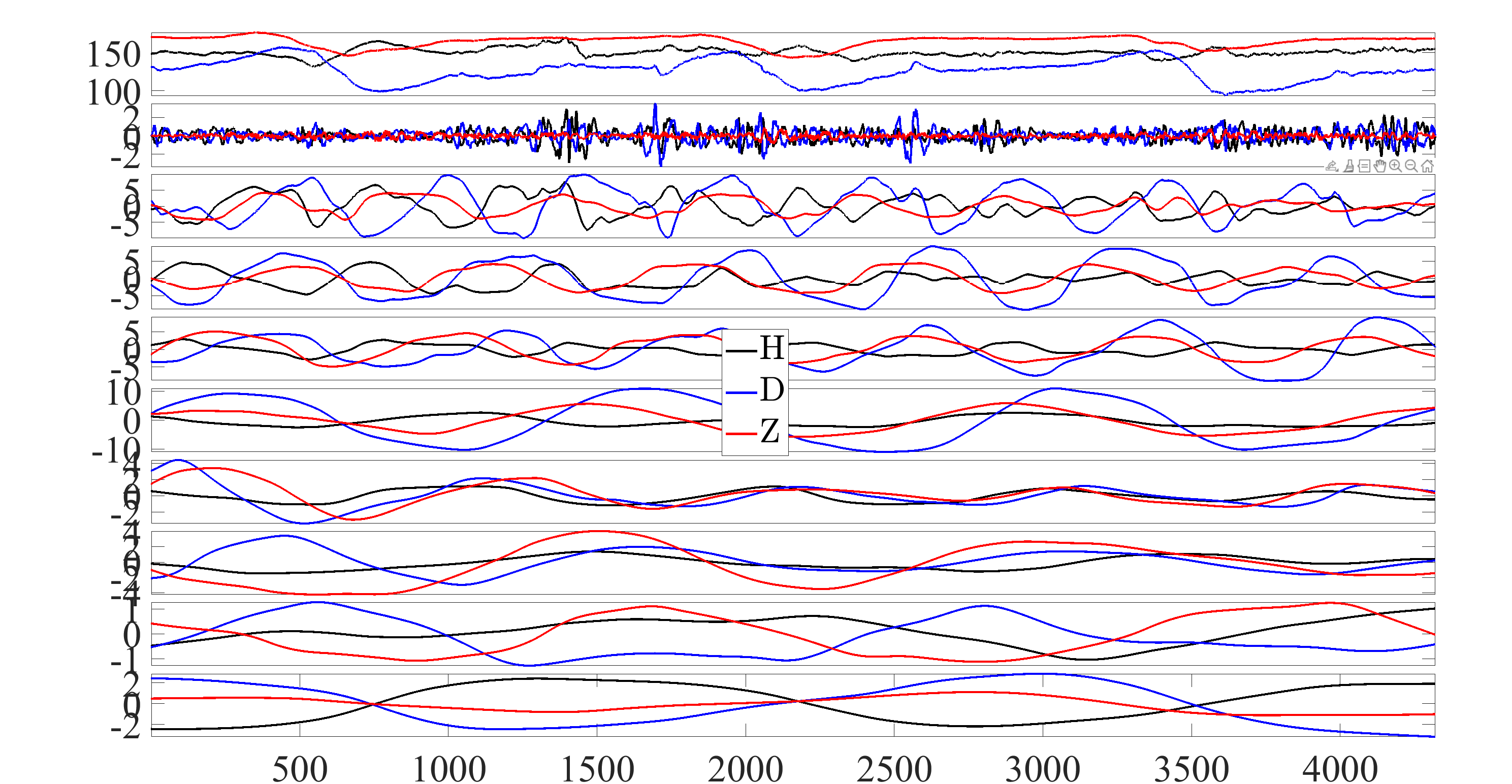} }}~\subfloat{{\includegraphics[width=0.495\textwidth]{./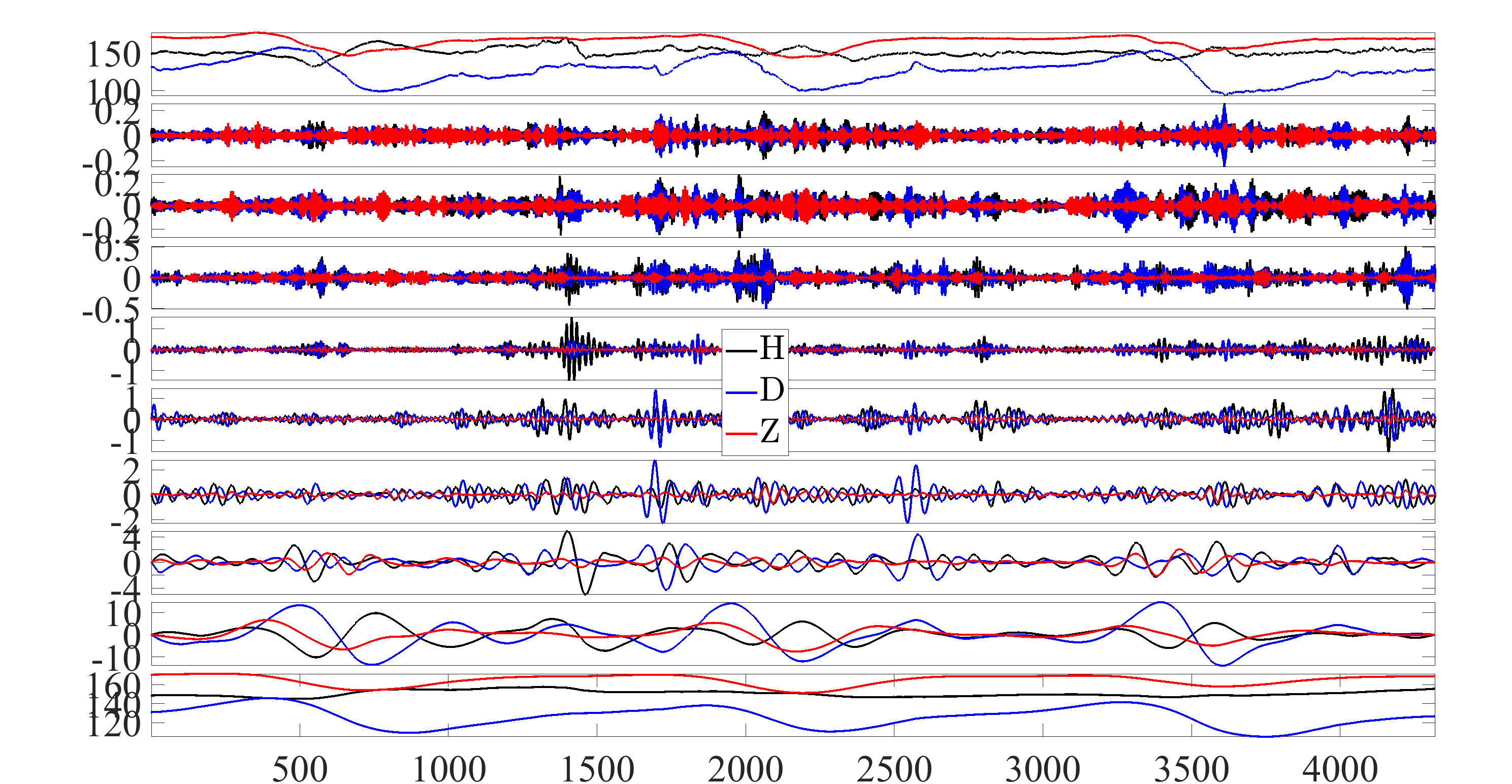} }}
	\caption{Decomposition of the magnetic field measured by one of the three ESA Swarm satellites from April 21 to 22, 2004. From left to right and from top to bottom, the MvFIF, MEMD, FA-MVEMD, and MVMD decompositions, respectively}\label{figS:Ex8_IMFs}
\end{figure}

\section{Conclusions}\label{sec:End}

Many real life problems require the time--frequency analysis of multivariate/multi-channel nonstationary signals. 

Several techniques have been proposed in recent years in the literature, %like, for instance, the Multivariate Empirical Mode Decomposition (MEMD) \cite{rehman2009Multivariate}, which extends the classical Empirical Mode Decomposition method to multichannel signals, the Multivariate Variational Mode Decomposition (MVMD) method \cite{ur2019multivariate}, which is based on optimization, the Multivariate Singular Spectrum Analysis (MSSA) algorithm \cite{golyandina2015MSSA}, which is based on Principal Component Analysis, the Synchrosqueezing--based time-frequency analysis method \cite{ahrabian2015synchrosqueezing}, the empirical wavelet transform based approach for multivariate data processing \cite{singh2018empirical}, and the multivariate projection-based EWT \cite{tripathy2020}, which leverage on the wavelet transform.
each of them presenting a certain degree of uncertainty. 
%All the aforementioned methods present a certain degree of uncertainty.  For instance, in the MEMD or multivariate projection-based EWT \cite{singh2018empirical,tripathy2020}, there is the need to compute projections of the signal, however it is not easy to quantify the minimal number of projections that guarantees good accuracy in the decomposition. Wavelet based techniques and optimization based methods necessitate the a priori selection of the basis to be used. In the multivariate VMD it is required to set a priori the number of components to be extracted. For the MSSA the uncertainty comes from the need of selecting a priori the size of a window and the number of components to be extracted by the decomposition. 

In this paper we proposed a new algorithm, named Multivariate Fast Iterative Filtering (MvFIF), for the decomposition of multivariate/multi-channel nonstationary data sets. MvFIF proves to produce the decomposition of a multichannel signal in a fast, reliable, and certain way. In fact, the proposed approach does not require any a priori assumption on the signal under investigation, like the selection of a basis or the number of components to be extracted, and it does not necessitate of any projection for the identification of the signal moving average. 

In this work, we showed that the proposed MvFIF method exhibits the so called frequency--alignment property, i.e. the alignment of similar frequencies in each simple component extracted across all data channels. We demonstrate MvFIF ability to separate multivariate modulated oscillations using a variety of multivariate signals ranging from synthetic nonstationary signals to multivariate wGn data and real world data sets coming from electroencephalogram (EEG) measurements and the Earth magnetic field data. We also confirm that a direct application channel--wise of the standard FIF proves to be unable to guarantee frequency--alignment, as described in detail in Section \ref{sec:freq-align}.

Furthermore, we proved that MvFIF inherits all the nice features of the FIF method, including its convergence, the ability to produce quasi--dyadic filterbank decompositions, and quasi--orthogonal IMFs, and to be robust to noise, even as the number of channels increases.

From the filterbank property analysis we observed that the MvFIF first IMF tends to contain a wide interval of frequencies. This is probably due to the FIF mask length tuning parameter selection whose influence in the decomposition has never been systematically studied so far. We plan to tackle this analysis, together with the analysis of the filter shape influence in the decomposition, in a future work.

When applied to real life signals, besides confirming all the aforementioned good performance, the  MvFIF technique proved to produce results comparable with other methods proposed so far in the literature.%, like the Multivariate Empirical Mode Decomposition (MEMD) method, which is the algorithm mainly used in the applications for the decomposition of this kind of signals, and the Multivariate Variational Mode Decomposition (MVMD) algorithm\cite{ur2019multivariate}. Whereas the Fast and Adaptive Multivariate and Multidimensional EMD (FA-MVEMD) technique, which was originally proposed in \cite{thirumalaisamy2018fast}, proves to have some instability issues.

From a computational time point of view, the proposed method proved to be from two to three orders of magnitudes faster than other methods, as confirmed by all the numerical tests presented in this work.

The main problem with FIF, and consequently with MvFIF, is its rigidity in extracting strongly nonstationary monocomponents, like chirps, multipaths and whistles, which can be contained in a signal. This limitation is due to the so called mode--splitting problem which effects also EMD and derived algorithms \cite{yeh2010complementary}. For this reason, the Adaptive Local Iterative Filtering (ALIF) and Resampled Iterative Filtering (RIF) algorithms have been recently proposed in the literature \cite{cicone2016adaptive,barbarino2021stabilization} as a more flexible generalizations of FIF. We plan to work to the extension of these techniques to handle multivariate signals in the future.

\section*{Acknowledgment}
%The authors want to thank Haomin Zhou for being such a great researcher and a marvelous person. This work could not be realized without his fundamental contributions.

We thank Henri Begleiter (Neurodynamics Laboratory at the State University of New York Health Center at Brooklyn) for making publicly available the EEG recordings of alcoholic subjects we used in our tests.

We thank the European Space Agency (ESA) that supports the Swarm mission for making available the data collected by Swarm constellation satellites \url{http://earth.esa.int/swarm}.

The authors are members of the Italian ``Gruppo Nazionale di Calcolo Scientifico'' (GNCS) of the Istituto Nazionale di Alta Matematica ``Francesco Severi'' (INdAM). A.C. thanks the Italian Space Agency for the financial support under the contract ASI ``LIMADOU scienza+'' n$^{\circ}$ 2016-16-H0.

\end{document}